\numberwithin{table}{section}
\newcommand{\A}{{\mathcal A}}
\newcommand{\B}{{\mathcal B}}
\newcommand{\N}{{\mathbb N}}
\newcommand{\R}{{\mathbb R}}
\newcommand{\Z}{{\mathbb Z}}
\newcommand{\cref}[1]{Cor.~\ref{#1}}
\newcommand{\egref}[1]{Example~\ref{#1}}
\newcommand{\fall}{\quad\text{for all }}
\renewcommand{\d}{\,{\mathrm d}}
\DeclareMathOperator{\dist}{dist}			% distance function
\DeclareMathOperator{\erf}{erf}				% error function
\theoremstyle{TH}{
\newtheorem{lemma}{Lemma}[section]
\newtheorem{theorem}[lemma]{Theorem}
\newtheorem{corollary}[lemma]{Corollary}

\newtheorem{example}[lemma]{Example}

}
\theoremstyle{THrm}{

}
\theoremstyle{THhit}{

}
\begin{document}%
%%%%%%%%%%%%%%%%%

\setcounter{page}{1}

\LRH{H. Huynh and A. Kalkan}

\RRH{Pullback and forward attractors of contractive difference equations}

%\VOL{x}

%\ISSUE{x}

%\PUBYEAR{202x}

%\BottomCatch

%\CLline

\subtitle{}

\title{Pullback and forward attractors of contractive difference equations}

\authorA{Huy Huynh and Abdullah Kalkan{\sf{*}}}

\affA{Universit\"at Klagenfurt,\\ Institut f\"ur Mathematik,\\ 9020 Klagenfurt am W\"orthersee, Austria\\
E-mail: pham.huynh@aau.at\\
E-mail: abukalkan@gmail.com\\
{\sf{*}}Corresponding author}

\begin{abstract}
The construction of attractors of a dissipative difference equation is usually based on compactness assumptions. In this paper, we replace them with contractivity assumptions under which the pullback and forward attractors are identical. As a consequence, attractors degenerate to unique bounded entire solutions. As an application, we investigate attractors of integrodifference equations which are popular models in theoretical ecology.
\end{abstract}

\KEYWORD{Pullback attractor, Forward attractor, Contractive mapping, Dissipative difference equation, Semilinear difference equation, Contractive difference equation, Integrodifference equation}

%\REF{to this paper should be made as follows: Huynh, H. and Kalkan, A., (2021) `Pullback and forward attractors of contractive difference equations'.}

\begin{bio}
Huy Huynh received his MSc in Applied Mathematics at Université de Rennes 1, France in 2018. His research interests include partial difference equations, dynamical systems, fluid dynamics, numerical analysis and their applications in modelling and discretisation. At present, he is a PhD student in Applied Mathematics at Universität Klagenfurt, Austria.

\noindent
After studying mathematics at the Ludwig-Maximilians-University Munich, Germany, Abdullah Kalkan received his MSc in Mathematics at University of Augsburg, Germany and PhD in Applied Mathematics at Universität Klagenfurt, Austria. His research interests include dynamical systems, numerical analysis, stochastic processes and stability theory of stochastic differential equations.

\noindent
This research of the authors has been supported by the Austrian Science Fund (FWF) under Grant Number P 30874-N35.
\end{bio}

\maketitle

\section{Introduction}
Different attractor notions for nonautonomous difference equations has received a large amount of attention over recent years as they reflect the long-term behaviour of a process and consist of bounded entire solutions. Whilst the theory of attraction to autonomous systems is well established where attractors are given by invariant $\omega$-limit sets, the generalization to nonautonomous systems is not always appropriate. This leads us to considering nonautonomous sets that are not necessarily invariant. There are two ways to describe attractors, namely \textit{forward attractors} and \textit{pullback attractors} (\cite{Huynh:20, Kloeden:2000, Kloeden:16}). These two types are independent concepts, whereby in the autonomous case they are equivalent (\cite{Kloeden:11}).

In order to construct an attractor, one is in need of dissipativity and compactness properties. In general, nevertheless, the latter may not be easy to verify and thus can be replaced by alternative assumptions such as contractivity. Indeed, an example of a contractive difference equation which is not compact will be shown in Subsection \mbox{\ref{excontractive}}. For this reason, our goal here is to investigate the existence and structure of pullback and forward attractors under the assumption of contraction. Furthermore, based on abstract fixed-point theorems, error estimates are provided to numerically approximate attractors. In particular, we discuss two classes of dissipative difference equations and show that forward and pullback attractors degenerate to unique bounded entire solutions: first, semilinear difference equations in Banach spaces and then contractive difference equations in general complete metric spaces.

Our abstract theory can be applied to integrodifference equations in Banach spaces of continuous functions over a compact domain. The right-hand side of such equations consists of nonlinear integral operators of Hammerstein type, which in theoretical ecology usually describe the spatial dispersal and evolution of species with nonoverlapping generations (\cite{Jacobsen:15, Kot:1986, Lutscher:19}). On the other hand, as integrodifference equations are infinite-dimensional dynamical systems in the sense that one cannot implement the solutions, we are interested in their spatial discretisation to simulate the dynamical behaviours obtained in finite-dimensional state spaces. Numerical simulations require discretisation. Thus, appropriate discretisation to replace our integral equations by some corresponding methods are given by common techniques in numerical analysis, e.g., Nystr\"om method (\mbox{\cite{Atkinson:09}}).

As an example, we consider the following realistic problem in which a species lives in a habitat and an integrodifference equation is used to model it. For simplicity, the habitat here can be considered as the compact interval $\left[-\tfrac{L}{2},\tfrac{L}{2}\right]$ in one dimension and the initial condition can be chosen as any function in $C\left[-\tfrac{L}{2},\tfrac{L}{2}\right]$. Now we iterate our equation and consider the growth and distribution of the species in each generation. Due to sufficient or insufficient natural conditions, the growth or decay of the total population can be observed in different seasons. Furthermore, ecologists take certain measures to protect and promote their growth and distribution in order to prevent the species from dying out. Therefore, we use and compare various supportive measures for the total population, i.e., inhomogeneities in our equation in the last subsection, in order to determine the best possible influence on the population.

The contents of this paper are as follows: In Section 2, we first establish the necessary terminology and provide the results for the unique existence and construction of attractors of two classes of dissipative difference equations: semilinear difference equations in Banach spaces and contractive difference equations in complete metric spaces. In addition, results for the latter are based on a version of contraction principle for composite mappings. Followed by that, Section 3 addresses the related notions for integrodifference equations of Hammerstein type and presents some applications of our abstract theory.

\paragraph{Notation} Let $\Z$ be the set of all integers and $\N:=\{1,2,...\}$ the positive integers. On a metric space $(U,d)$, $I_U$ is the identity map, $B_r(x)$ and $\bar{B}_r(x)$ the open and closed balls respectively, with center $x\in U$ and radius $r>0$. We write $\dist(x,A):=\inf_{a\in A}d(x,a)$ for the distance of $x$ from a set $A\subseteq U$ and $B_r(A):=\{x\in X:\,\dist(x,A)<r\}$ for its $r$-neighborhood. The Hausdorff semidistance of bounded and closed sets $A,B \subseteq U$ is then defined as
\begin{align*}
\dist(A,B):=\sup_{a\in A}\inf_{b\in B}d(a,b). 
\end{align*}
A subset $\A \subseteq\Z\times U$ with $t$-fibers defined by $\A(t):=\{u\in U:\,(t,u)\in \A\}$ is called a nonautonomous set.

\section{Nonautonomous difference equations} \label{sec:DDE}
Let $(U,d)$ be a complete metric space. The paper deals with nonautonomous difference equations of the form
\begin{align} \label{deq}
u_{t+1}=H_t(u_t) \tag{$\Delta$}
\end{align}
with right-hand side $H_t:U\to U$. For an \textit{initial time} $\tau \in \Z$, a ~\textit{forward solution} to \eqref{deq} is a sequence $(u_t)_{\tau\leq t}$ in $U$ satisfying
\begin{align} \label{solid}
u_{t+1}\equiv H_t(u_t)
\end{align}
for all $\tau\leq t\in\Z$ while an \textit{entire solution} $(u_t)_{t\in\Z}$ satisfies \eqref{solid} on $\Z$. We define the \textit{general solution} $\varphi:\{(t,\tau,u)\in\Z^2\times U:\,\tau\leq t\}\to U$ to \eqref{deq}  by
\begin{align*}
\varphi(t,\tau,u)
:=
\begin{cases}
u,&\tau=t,\\
H_{t-1} \circ \ldots \circ H_{\tau}(u),&\tau<t.
\end{cases}
\end{align*}
By the definition of $\varphi$, the \textit{process property}
\begin{align} \label{pp}
\varphi(t,s,\varphi(s,\tau,u))=\varphi(t,\tau,u)
\end{align}
holds for all $\tau \leq s\leq t \in \Z$ and $u\in U$.
A solution $u^\ast=(u^\ast_t)_{t\in \Z}$ to \eqref{deq} is called \textit{globally attractive}, if %$\lim_{t\rightarrow\infty} d(u^\ast_t,\varphi(t,\tau,u_\tau))=0$ %for every $\tau \in \Z$, there exists a real $\rho>0$ such that 
\begin{align*} %\label{attractivity}
\lim_{t\rightarrow\infty} d(u^\ast_t,\varphi(t,\tau,u_\tau))=0
\end{align*}
holds for all $\tau \in \Z$ and $u_\tau \in U$.

A nonautonomous set $\A$ is called
\begin{itemize}
\item \textit{positively invariant} or \textit{invariant}, if %$H_t(\A(t))\subseteq\A(t+1)$ or $H_t(\A(t))=\A(t+1)$
\begin{align*}
H_t(\A(t))\subseteq\A(t+1)\textrm{ or } H_t(\A(t))=\A(t+1)
\end{align*}
holds for all $t\in\Z$, respectively,
\item \textit{compact}, if every $t$-fiber $\A(t)$ is compact.
\end{itemize}
Moreover, a nonautonomous set $\A$ with the two properties of invariance and compactness
%\begin{itemize}
%	\item $\A$ is invariant, i.e., $\A(t+1)=H_t(\A(t))$ for all $t\in\Z$,
%	
%	\item $\A$ is compact, i.e., every fiber $\A(t)$ is compact,
%\end{itemize}
is called a \textit{forward attractor} of \eqref{deq}, if 
\begin{align*}
\lim_{t\to\infty}\dist(\varphi(t,\tau,B),\A(t))=0
\quad\text{for all bounded }B\subseteq U
\end{align*}
and a \textit{pullback attractor} of \eqref{deq}, if 
\begin{align*}
\lim_{\tau\to-\infty}\dist(\varphi(t,\tau,B),\A(t))=0
\quad\text{for all bounded }B\subseteq U. 
\end{align*}

We denote equations \eqref{deq} as $\theta$-periodic for some $\theta\in\N$, if
\begin{align} \label{pdeq}
H_{t+\theta}=H_t
\end{align}
holds for all $t\in\Z$ and $\theta$ is consequently denoted the period. In case of $\theta=1$, i.e., $H_{t+1}=H_t=:H$ for each $t\in \Z$, we say \eqref{deq} is autonomous.

\subsection{Semilinear difference equations}
We consider nonautonomous difference equations \eqref{deq} in Banach spaces $(\tilde{U},\|\cdot\|)$. Let $L_t\in L(\tilde{U})$, $t\in\Z$, be a sequence of bounded linear operators and $K_t:\tilde{U}\to\tilde{U}$, $t\in\Z$, be mappings. We consider \eqref{deq} of the semilinear form
\begin{align} \label{semilineardeq}
H_t(u):=L_t u + K_t(u).
\end{align}
The general solution to \eqref{deq} of the form \eqref{semilineardeq} by the variation of constants formula \cite[Theorem 3.1.16, p.~100]{Poetzsche:10} is of the form

\begin{align} \label{voc}
\varphi(t,\tau,u_\tau)=\Phi(t,\tau)u_\tau+\sum_{s=\tau}^{t-1}\Phi(t,s+1)K_s\left(\varphi(s,\tau,u_\tau)\right) \fall \tau\leq t,
\end{align}
where the transition operator $\Phi:\{(t,\tau)\in\Z^2:\tau\leq t\}\to L(\tilde{U})$ is defined by
\begin{align*}
\Phi(t,\tau):=\begin{cases}
L_{t-1}\ldots L_\tau, &\tau<t,\\
I_{\tilde{U}}, &\tau=t.
\end{cases}
\end{align*}

The next lemma is helpful for proving the main result in this subsection:
\begin{lemma} \label{semilinearlem}
Let $H_t: \tilde{U}\to \tilde{U}$ be of the semilinear form \eqref{semilineardeq}. Suppose there exist reals $\kappa_t\geq 0, \alpha_t> 0$ ~for all $t\in\Z$ and $\gamma \geq 1$ such that
\begin{gather} \label{semilinearass1}
\|K_t(u)-K_t(\bar{u})\|\leq \kappa_t\|u-\bar{u}\|
\end{gather}
and
\begin{gather} \label{semilinearass2}
\|\Phi(t,\tau)\|\leq \gamma \prod_{r=\tau}^{t-1} \alpha_r 
\end{gather}
hold for all $\tau\leq t\in\Z$ and $u,\bar{u}\in \tilde{U}$, then the general solution of \eqref{semilineardeq} satisfies the estimate
\begin{align} \label{semilinearest}
\|\varphi(t,\tau,u)-\varphi(t,\tau,\bar{u})\|\leq \gamma\|u-\bar{u}\| \prod_{r=\tau}^{t-1} (\alpha_r+\gamma \kappa_r)
\end{align}
for all $\tau\leq t\in\Z$ and $u,\bar{u}\in \tilde{U}$.
\end{lemma}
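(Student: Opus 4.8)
The plan is to feed the two solutions $\varphi(t,\tau,u)$ and $\varphi(t,\tau,\bar u)$ into the variation of constants formula \eqref{voc}, subtract, and convert the resulting identity into a weighted discrete Gronwall inequality for the error $e_t:=\|\varphi(t,\tau,u)-\varphi(t,\tau,\bar u)\|$. After subtracting the two representations and applying the triangle inequality, the Lipschitz assumption \eqref{semilinearass1} on each $K_s$, and the growth bound \eqref{semilinearass2} on $\Phi$ (used once with lower index $\tau$ for the initial term and with lower index $s+1$ inside the sum), I expect to arrive at
\begin{align*}
e_t\leq\gamma\Big(\prod_{r=\tau}^{t-1}\alpha_r\Big)\|u-\bar u\|+\sum_{s=\tau}^{t-1}\gamma\Big(\prod_{r=s+1}^{t-1}\alpha_r\Big)\kappa_s\,e_s\quad\text{for all }\tau\leq t.
\end{align*}

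Next, since every $\alpha_r>0$, I would normalize by dividing through by the strictly positive product $\prod_{r=\tau}^{t-1}\alpha_r$. Setting $w_t:=e_t\big/\prod_{r=\tau}^{t-1}\alpha_r$ and telescoping the ratio of partial products $\prod_{r=s+1}^{t-1}\alpha_r\big/\prod_{r=\tau}^{t-1}\alpha_r=1\big/\prod_{r=\tau}^{s}\alpha_r$, the transition weights collapse and the inequality reduces to the standard unweighted form
\begin{align*}
w_t\leq\gamma\|u-\bar u\|+\sum_{s=\tau}^{t-1}\frac{\gamma\kappa_s}{\alpha_s}\,w_s.
\end{align*}

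Then I would invoke the discrete Gronwall inequality: from $w_t\leq C+\sum_{s=\tau}^{t-1}\beta_s w_s$ with $C,\beta_s\geq 0$ one obtains $w_t\leq C\prod_{s=\tau}^{t-1}(1+\beta_s)$. With $C=\gamma\|u-\bar u\|$ and $\beta_s=\gamma\kappa_s/\alpha_s$ this gives $w_t\leq\gamma\|u-\bar u\|\prod_{s=\tau}^{t-1}(1+\gamma\kappa_s/\alpha_s)$. Finally, multiplying back by $\prod_{r=\tau}^{t-1}\alpha_r$ and absorbing each $\alpha_s$ into the corresponding factor yields $\prod_{s=\tau}^{t-1}(\alpha_s+\gamma\kappa_s)$, which is exactly the claimed estimate \eqref{semilinearest}; the base case $t=\tau$ is immediate from $\gamma\geq 1$ together with the empty-product convention.

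The triangle-inequality bookkeeping in the first step is routine, and the discrete Gronwall lemma in the third step is standard. The only genuine subtlety is the normalization in the second step, which is what turns the $s$-dependent transition weights into the clean telescoped coefficients $\gamma\kappa_s/\alpha_s$ that the Gronwall lemma requires, and this is precisely where the positivity hypothesis $\alpha_r>0$ is essential. A direct induction on $t-\tau$ using \eqref{voc} would be an alternative to steps two and three, but the normalization-plus-Gronwall route keeps the algebra shortest.
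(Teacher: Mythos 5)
Your proposal is correct and follows essentially the same route as the paper: apply the variation of constants formula \eqref{voc} with the bounds \eqref{semilinearass1}--\eqref{semilinearass2}, normalize by the positive product $\prod_{r=\tau}^{t-1}\alpha_r$ so the transition weights telescope into coefficients $\gamma\kappa_s/\alpha_s$, and conclude with the discrete Gr\"onwall inequality. The paper cites the Gr\"onwall lemma from P\"otzsche's monograph rather than spelling it out, but the argument is the same.
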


\begin{proof} {Proof} 
Let $\tau\leq t\in\Z$ and $u,\bar{u}\in \tilde{U}$. The variation of constants formula \eqref{voc} and the assumptions \eqref{semilinearass1}-\eqref{semilinearass2} then imply
\begin{align*}
\|\varphi(t,\tau,u)-\varphi(t,\tau,\bar{u})\|&\leq \gamma\|u-\bar{u}\| \prod_{r=\tau}^{t-1} \alpha_r + \gamma\sum_{s=\tau}^{t-1}\kappa_s \|\varphi(s,\tau,u)-\varphi(s,\tau,\bar{u})\| \prod_{r=s+1}^{t-1} \alpha_r.
\end{align*}
This leads to the estimate
\begin{align*}
\|\varphi(t,\tau,u)-\varphi(t,\tau,\bar{u})\| \dfrac{1}{\prod_{r=\tau}^{t-1}\alpha_r} &\leq \gamma\|u-\bar{u}\| \\
& \qquad + \gamma\sum_{s=\tau}^{t-1}\dfrac{\kappa_s}{\alpha_s} \|\varphi(s,\tau,u)-\varphi(s,\tau,\bar{u})\| \dfrac{1}{\prod_{r=\tau}^{s-1}\alpha_r}.
\end{align*}
Then the discrete Gr\"onwall inequality from \cite[Proposition A.2.1(a), p.~348]{Poetzsche:10} yields the claimed estimate \eqref{semilinearest}.
%	\begin{align*}
%	\|\varphi(t,\tau,u)-\varphi(t,\tau,\bar{u})\|\leq K\|u-\bar{u}\| \prod_{r=\tau}^{t-1} (\alpha+Kk_r),
%	\end{align*}
%	which is the claimed estimate \eqref{semilinearest}.
\end{proof}

In order to construct pullback and forward attractors of the equation \eqref{deq} in the semilinear form \eqref{semilineardeq}, we assume \eqref{deq} is pullback absorbing, i.e., there is a nonautonomous set $\B \subseteq \Z\times \tilde U $ satisfying 
\begin{itemize}
\item there exists a real $\rho>0$ such that $\B(t) \subseteq \bar B_\rho(0)$ for all $t\in\Z$,
\item for all bounded nonautonomous subsets $C\subseteq\Z\times\tilde U$, there exists a real $S\in\N$ such that 
\begin{align*}
\varphi(t,t-s,C(t-s))\subseteq \B(t) \fall s\geq S,
\end{align*}
\item $\B$ is positively invariant. %, i.e., $H_t(\B(t))\subseteq \B(t+1)$ for all $t\in\Z$.
\end{itemize}

\begin{theorem} \label{semilinearattractor}
Let \eqref{deq} of the form \eqref{semilineardeq} satisfy the assumptions of Lemma \ref{semilinearlem}. If \eqref{deq} has a pullback absorbing set $\B$ and 
\begin{align}\label{limtau}
\lim_{\tau \to -\infty} \prod_{r=\tau}^{t-1} (\alpha_r+\gamma \kappa_r)=0
\end{align}
for a fixed $t\in\Z$, then \eqref{deq} has a unique bounded entire solution $u^\ast=(u^\ast_t)_{t\in\Z}$ and a unique pullback attractor $\A^\ast:=\{(t, u^\ast_t)\in\Z\times \tilde{U}: t\in\Z\}$. If, in addition,
\begin{align}\label{limt}
\lim_{t \to \infty} \prod_{r=\tau}^{t-1} (\alpha_r+\gamma \kappa_r)=0
\end{align}
holds for a fixed $\tau\in\Z$, then $\A^\ast$ is also the forward attractor of \eqref{deq}.
\end{theorem}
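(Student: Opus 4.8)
The plan is to construct the bounded entire solution as a \emph{pullback limit} and then read off every attractor property from the single contraction estimate \eqref{semilinearest} of Lemma \ref{semilinearlem}, combined with the decay hypotheses \eqref{limtau} and \eqref{limt}. First I would fix $t\in\Z$, choose points $b_s\in\B(s)$ for all $s\le t$, and study the sequence $\varphi(t,t-n,b_{t-n})$, $n\in\N$. Because $\B$ is positively invariant, the process property \eqref{pp} gives $\varphi(t-n,t-m,b_{t-m})\in\B(t-n)\subseteq\bar B_\rho(0)$ whenever $m>n$; writing $\varphi(t,t-m,b_{t-m})=\varphi(t,t-n,\varphi(t-n,t-m,b_{t-m}))$ and feeding the two base points $\varphi(t-n,t-m,b_{t-m}),b_{t-n}\in\B(t-n)$ into \eqref{semilinearest} yields
\[
\|\varphi(t,t-m,b_{t-m})-\varphi(t,t-n,b_{t-n})\|\le 2\gamma\rho\prod_{r=t-n}^{t-1}(\alpha_r+\gamma\kappa_r).
\]
By \eqref{limtau} the right-hand side tends to $0$ as $n\to\infty$, so the sequence is Cauchy; completeness of $\tilde U$ produces a limit $u^\ast_t$, and the same estimate shows this limit is independent of the chosen $b_s$ and satisfies $\|u^\ast_t\|\le\rho$.

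Next I would verify that $u^\ast=(u^\ast_t)_{t\in\Z}$ solves \eqref{deq}. Since $H_t=L_t(\cdot)+K_t(\cdot)$ is continuous (bounded linear part, and $K_t$ Lipschitz by \eqref{semilinearass1}), passing to the limit $\sigma\to-\infty$ in $\varphi(t+1,\sigma,b_\sigma)=H_t(\varphi(t,\sigma,b_\sigma))$ gives $u^\ast_{t+1}=H_t(u^\ast_t)$, so $u^\ast$ is a bounded entire solution. Uniqueness among bounded entire solutions is immediate: if $v^\ast$ is another with $\|v^\ast_t\|\le M$, then $u^\ast_t=\varphi(t,\tau,u^\ast_\tau)$ and $v^\ast_t=\varphi(t,\tau,v^\ast_\tau)$, so \eqref{semilinearest} bounds $\|u^\ast_t-v^\ast_t\|\le\gamma(\rho+M)\prod_{r=\tau}^{t-1}(\alpha_r+\gamma\kappa_r)$, which vanishes as $\tau\to-\infty$ by \eqref{limtau}, forcing $u^\ast=v^\ast$.

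Now set $\A^\ast(t):=\{u^\ast_t\}$. Each fiber is a singleton, hence compact, and the invariance $H_t(\A^\ast(t))=\A^\ast(t+1)$ is exactly the entire-solution identity. For pullback attraction take a bounded $B\subseteq\tilde U$ with $R:=\sup_{b\in B}\|b\|$; using $u^\ast_t=\varphi(t,\tau,u^\ast_\tau)$, $\|u^\ast_\tau\|\le\rho$, and \eqref{semilinearest},
\[
\dist(\varphi(t,\tau,B),\A^\ast(t))=\sup_{b\in B}\|\varphi(t,\tau,b)-u^\ast_t\|\le\gamma(R+\rho)\prod_{r=\tau}^{t-1}(\alpha_r+\gamma\kappa_r),
\]
which tends to $0$ as $\tau\to-\infty$ by \eqref{limtau}; thus $\A^\ast$ is a pullback attractor, and uniqueness follows because any pullback attractor is invariant and compact, so its fibers consist of bounded entire solutions, which must coincide with $u^\ast$. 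Under the additional hypothesis \eqref{limt} the very same displayed bound tends to $0$ as $t\to\infty$ for fixed $\tau$, giving forward attraction and identifying $\A^\ast$ as the forward attractor as well.

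I expect the construction of $u^\ast$ to be the only genuinely delicate step: one must exploit positive invariance of $\B$ carefully so that \emph{both} base points fed into \eqref{semilinearest} land in $\bar B_\rho(0)$, and establish that the pullback limit is well defined before it can be used everywhere else. Once $u^\ast$ is in hand, invariance, compactness, both attraction properties, and all uniqueness claims reduce to specializing the single estimate \eqref{semilinearest} to the limits $\tau\to-\infty$ and $t\to\infty$.
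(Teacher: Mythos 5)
Your proposal is correct and follows essentially the same route as the paper: construct $u^\ast_t$ as a pullback limit using positive invariance of $\B$ and the Cauchy estimate from \eqref{semilinearest}, then derive uniqueness, pullback attraction, and forward attraction by letting $\tau\to-\infty$ or $t\to\infty$ in that same estimate. Your only (harmless) deviations are that you verify pullback attraction directly for arbitrary bounded $B$ rather than citing the structural fact that pullback attractors consist of bounded entire solutions, and you spell out the continuity argument showing the pullback limit is an entire solution, which the paper merely asserts.
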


The following proof is in fact the discrete time version of the proof of \cite[Theorem 5.4, p.~35]{Kloeden:2020}.

\begin{proof} {Proof} 
Let $\B$ be a pullback absorbing set of \eqref{deq}, $\tau\in\Z$ and $t_n\leq \tau$ a monotone decreasing sequence with $t_n\to-\infty$ as $n\to\infty$ and $b_n\in \B(t_n)$ for all $n\in\N$.
\begin{itemize}
\item[(I)] Define the sequence 
\begin{align*} %\label{phi}
\phi_n:=\varphi(\tau,t_n,b_n)
\end{align*} 
for all $n\in\N$. It is easy to see that $\phi_n\in \B(\tau)$ due to the positive invariance of $\B$. Moreover, $(\phi_n)_{n\in\N}$ is Cauchy, i.e., for all $\epsilon>0$, there exists a real $N\in\N$ such that 
\begin{align*}
\|\phi_n-\phi_m \| < \epsilon \fall n,m \geq N.
\end{align*}
Indeed, w.l.o.g. for $m\geq n$, the process property \eqref{pp} yields
\begin{align*}
\phi_m =\varphi(\tau,t_m,b_m)=\varphi(\tau,t_n,\varphi(t_n,t_m,b_m)) =\varphi(\tau,t_n,\bar{x}_{n,m})
\end{align*}
where $\bar{x}_{n,m}:=\varphi(t_n,t_m,b_m)\in \B(t_n)$ due to the positive invariance of $\B$. Then by the definition of $\phi_n$, the estimate \eqref{semilinearest} and the triangle inequality, one obtains 
\begin{align*}
\|\phi_n-\phi_m\|&=\|\varphi(\tau,t_n, b_n)-\varphi(\tau,t_n,\bar{x}_{n,m})\|\\
&\leq \gamma\|b_n-\bar{x}_{n,m}\| \prod_{r=t_n}^{\tau-1} (\alpha_r+\gamma \kappa_r)\\
&\leq 2 \gamma \rho \prod_{r=t_n}^{\tau-1} (\alpha_r+\gamma \kappa_r).
\end{align*}
Combining this with the assumption \eqref{limtau} yields $\|\phi_n-\phi_m \| < \epsilon$ for some sufficiently large $n$. Therefore, $(\phi_n)_{n\in\N}$ is Cauchy and consequently has a unique limit $u^\ast_\tau$ due to the completeness of $\tilde U$.

\item[(II)] The sequence $(u^\ast_\tau)_{\tau\in\Z}$ constructed in step (I) satisfies $u^\ast_\tau\in \B(\tau)$ and $u^\ast_{\tau+1}=H_\tau(u^\ast_\tau)$. Hence, $(u^\ast_t)_{t\in\Z}$ is an entire solution to \eqref{deq} and bounded due to $u^\ast_t\in \B(t)\subseteq \bar B_\rho(0)$ for all $t\in\Z$.

\item[(III)] The aim of this step is to show the uniqueness of a bounded entire solution $(u^\ast_t)_{t\in\Z}$ to \eqref{deq} in $\B$.

Let $(v^\ast_t)_{t\in\Z}$ be another bounded entire solution to \eqref{deq} in $\B$. By the definition of an entire solution to \eqref{deq}, the estimate \eqref{semilinearest}, the triangle inequality and the assumption \eqref{limtau}, we obtain
\begin{align*}
\|u^\ast_t-v^\ast_t\|&= \|\varphi(t,\tau,u^\ast_\tau)-\varphi(t,\tau,v^\ast_\tau)\|\\
&\leq \gamma\|u^\ast_\tau-v^\ast_\tau\| \prod_{r=\tau}^{t-1} (\alpha_r+\gamma \kappa_r)\\
&\leq 2\gamma \rho \prod_{r=\tau}^{t-1} (\alpha_r+\gamma \kappa_r)\xrightarrow[\tau\to-\infty]{}0.
\end{align*}
Hence, $u_t^\ast=v_t^\ast$ for all $t\in\Z$, implying the uniqueness of the bounded entire solution $(u^\ast_t)_{t\in\Z}$ to \eqref{deq} in $\B$. Moreover, since a pullback attractor consists of all bounded entire solutions (cf. \cite[Corollary 1.3.4, p.~17]{Poetzsche:10}), the nonautonomous set $\A^\ast=\{(t, u^\ast_t): t\in\Z\}$ is the pullback attractor.

\item[(IV)] In this step, additionally suppose the assumption \eqref{limt} holds for one $\tau\in\Z$ and consequently for all $\tau\in\Z$. We want to prove that $\mathcal A^\ast$ is also the forward attractor to \eqref{semilineardeq}. Indeed, for all $u_\tau\in\tilde U$, by the definition of $u^\ast_t$, the estimate \eqref{semilinearest}, the triangle inequality and the assumption \eqref{limt}, one obtains
\begin{align*}
\|\varphi(t,\tau,u_\tau)-u^\ast_t\|&= \|\varphi(t,\tau,u_\tau)-\varphi(t,\tau,u^\ast_\tau)\|\\
&\leq \gamma\|u_\tau-u^\ast_\tau\| \prod_{r=\tau}^{t-1} (\alpha_r+\gamma \kappa_r)\\
&\leq 2\gamma\rho \prod_{r=\tau}^{t-1} (\alpha_r+\gamma \kappa_r) \xrightarrow[t\to\infty]{}0.
\end{align*}
Thus, $\A^\ast$ is also the forward attractor.

\end{itemize}
The proof is now set and done.
\end{proof}

Next we regard a special case of the previous results where \mbox{\eqref{deq}} of the form \mbox{\eqref{semilineardeq}} is $\theta$-periodic for some $\theta\in\N$, i.e.,
\begin{align} \label{psemilinearass}
L_{t+\theta}=L_t \textrm{ and } K_{t+\theta}=K_t
\end{align}
hold for all $t\in\Z$, as follows.
\begin{corollary} \label{corpsemilinear}
Let \eqref{deq} of the form \eqref{semilineardeq} satisfy the assumptions of Lemma \ref{semilinearlem}. If there exists a $\theta\in\N$ such that the assumptions \eqref{psemilinearass} hold for all $t\in\Z$ and
%Suppose there exists a $\theta \in\N$ such that $L_{t+\theta}=L_t$ and  $K_{t+\theta}=K_t$ hold for all $t\in\Z$. If
\begin{align*} %\label{corsemilinearass}
\prod_{r=0}^{\theta-1} (\alpha_r+\gamma\kappa_r)<1,
\end{align*}
then the unique bounded entire solution $u^\ast$ to \eqref{deq} is $\theta$-periodic and the nonautonomous set $\A^\ast=\{(t, u^\ast_t)\in\Z\times \tilde{U}: t\in\Z\}$ is the pullback and forward attractor of \eqref{deq}.
\end{corollary}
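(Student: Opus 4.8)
The plan is to deduce the statement from Theorem \ref{semilinearattractor}, for which two ingredients must be supplied on top of the standing hypotheses of Lemma \ref{semilinearlem}: the two decay conditions \eqref{limtau} and \eqref{limt}, and a pullback absorbing set. Since $L_t$ and $K_t$ are $\theta$-periodic, the constants in Lemma \ref{semilinearlem} may be taken $\theta$-periodic as well, consistent with the product appearing in the hypothesis, so that the product over any full period equals $\lambda:=\prod_{r=0}^{\theta-1}(\alpha_r+\gamma\kappa_r)$. Writing $t-\tau=k\theta+j$ with $0\le j<\theta$, multiplicativity of the product gives
\begin{align*}
\prod_{r=\tau}^{t-1}(\alpha_r+\gamma\kappa_r)=\lambda^k\prod_{r=\tau}^{\tau+j-1}(\alpha_r+\gamma\kappa_r)\leq C\lambda^{\lfloor(t-\tau)/\theta\rfloor},
\end{align*}
where $C:=\max_{0\le j<\theta}\prod_{r=0}^{j-1}(\alpha_r+\gamma\kappa_r)$ is finite. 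As $\lambda<1$ by assumption, the right-hand side tends to $0$ both as $\tau\to-\infty$ (with $t$ fixed) and as $t\to\infty$ (with $\tau$ fixed), which is exactly \eqref{limtau} and \eqref{limt}.

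It then remains to build a pullback absorbing set, and for this I would first produce a bounded reference solution. By Lemma \ref{semilinearlem} the $k$-fold period map $\varphi(k\theta,0,\cdot)$ is Lipschitz with constant $\gamma\lambda^k$, so choosing $k$ large enough that $\gamma\lambda^k<1$ makes it a contraction on the complete space $\tilde U$, whence it has a unique fixed point $p$. Since the period map $\varphi(\theta,0,\cdot)$ commutes with its $k$-th iterate, $\varphi(\theta,0,p)$ is again a fixed point of that iterate, so by uniqueness $\varphi(\theta,0,p)=p$, and $u^\ast_t:=\varphi(t,0,p)$ extended $\theta$-periodically is a bounded $\theta$-periodic entire solution. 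Because single steps of $H_t$ may be expansive, a ball centred on this orbit need not be positively invariant; I would therefore pass to the forward-saturated tube
\begin{align*}
\B(t):=\overline{\bigcup_{s\geq 0}\varphi\bigl(t,t-s,\bar B_1(u^\ast_{t-s})\bigr)}.
\end{align*}
Positive invariance is then immediate from the process property \eqref{pp}, since applying $H_t$ merely shifts the index $s$ by one; the contraction estimate \eqref{semilinearest} together with the periodicity of $u^\ast$ bounds every element of $\B(t)$ by $\sup_t\|u^\ast_t\|+\gamma C$ uniformly in $t$, giving boundedness; and absorption follows because the fibre $\bar B_1(u^\ast_t)$ already absorbs every bounded set, again by \eqref{semilinearest} and $\lambda<1$. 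Verifying these three properties of $\B$ simultaneously, in particular reconciling the possible one-step expansion with a uniformly bounded tube, is the step I expect to demand the most care.

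With \eqref{limtau}, \eqref{limt} and the absorbing set $\B$ in hand, Theorem \ref{semilinearattractor} delivers a unique bounded entire solution and identifies $\A^\ast$ as both the pullback and the forward attractor. Finally, to read off $\theta$-periodicity I would observe that the shifted sequence $(u^\ast_{t+\theta})_{t\in\Z}$ is again a bounded entire solution, because $H_{t+\theta}=H_t$ by \eqref{psemilinearass}; uniqueness then forces $u^\ast_{t+\theta}=u^\ast_t$ for all $t\in\Z$ (equivalently, this periodicity is already built into the period-map construction of $u^\ast$), which completes the argument.
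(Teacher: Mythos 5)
Your argument is correct, and it diverges from the paper's proof in an instructive way. The paper's proof is a two-line reduction: it notes that periodicity of $L_t,K_t$ makes $\alpha_t,\kappa_t$ periodic, deduces \eqref{limtau} and \eqref{limt} from $\prod_{r=0}^{\theta-1}(\alpha_r+\gamma\kappa_r)<1$ exactly as you do (without writing out the $t-\tau=k\theta+j$ decomposition), invokes Theorem \ref{semilinearattractor}, and obtains periodicity of $u^\ast$ from the fiber identity $\A^\ast(t+\theta)=\A^\ast(t)$ for periodic attractors, cited from P\"otzsche. It never addresses the remaining hypothesis of Theorem \ref{semilinearattractor}, namely the existence of a pullback absorbing set $\B$; since the corollary's statement does not assume one, the paper's reduction is, strictly speaking, incomplete on this point. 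Your proposal fills precisely that gap: the fixed point $p$ of the eventually contracting period map $\varphi(k\theta,0,\cdot)$ yields a bounded $\theta$-periodic entire solution, and the forward-saturated tube $\B(t)$ built around it is positively invariant by the process property \eqref{pp}, uniformly bounded because $\sup_{s\ge0}\prod_{r=t-s}^{t-1}(\alpha_r+\gamma\kappa_r)<\infty$, and absorbing because $\bar{B}_1(u^\ast_t)\subseteq\B(t)$. (Minor slip: your constant $C$ should be a maximum over the starting residue as well as the length, i.e. $C=\max_{0\le i,j<\theta}\prod_{r=i}^{i+j-1}(\alpha_r+\gamma\kappa_r)$, since the leftover partial product depends on $\tau\bmod\theta$; this changes nothing.) Your periodicity argument, applying uniqueness of the bounded entire solution to the shifted sequence $(u^\ast_{t+\theta})_{t\in\Z}$, is also more elementary and self-contained than the paper's appeal to an external proposition. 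In short, your route is longer but proves the corollary as actually stated, whereas the paper's proof tacitly presupposes the absorbing set.
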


\begin{proof} {Proof} 
The periodicity of $L_t$ and $K_t$ in the assumptions \eqref{psemilinearass} extends to the right-hand side $H_t$ of \eqref{deq} as well as to the constants $\kappa_t$ and $\alpha_t$ in the assumptions \eqref{semilinearass1} and \eqref{semilinearass2}, respectively. This satisfies the assumptions \eqref{limtau} and \eqref{limt} for a fixed $t\in\Z$ and a fixed $\tau\in\Z$, respectively. Hence, Theorem \ref{semilinearattractor} implies \eqref{deq} has a unique bounded entire solution $u^\ast$ and the pullback and forward attractor $\A^\ast$. Moreover, thanks to {\cite[Proposition 1.4.5, p.~22]{Poetzsche:10}},
$\{u^\ast_{t+\theta}\}=\A^\ast(t+\theta)=\A^\ast(t)=\{u_t^\ast\}$ for all $t\in\Z$. This proves the periodicity of $u^\ast$. 
\end{proof}

\subsection{Contractive difference equations}
In this subsection, we return to nonautonomous difference equations \eqref{deq} in general complete metric spaces $(U,d)$ whose right-hand side $H_t$ satisfies the following standing assumptions:
\begin{itemize}
\item [(i)] there exists a real $\lambda_t\geq 0$ such that
\begin{align*} %\label{assi}
d(H_t(u),H_t(\bar{u}))\leq\lambda_td(u,\bar{u}) \fall t\in\Z, u,\bar{u}\in U,
\end{align*}

\item [(ii)] there exists a sequence $(\tilde{u}_t)_{t\in\Z}$ such that $\left(H_t(\tilde{u}_t)\right)_{t\in\Z}$ is bounded for all $t\in\Z$, i.e., there exist a real $R>0$ and a point $\hat{u}\in U$ such that 
\begin{align*} %\label{assii}
H_t(\tilde{u}_t) \subseteq B_R(\hat{u}) \fall t\in\Z.
\end{align*}
\end{itemize}

The following lemma enables us to prove our results in this subsection. It is a variant of the contraction mapping principle {\cite[Theorem 17.1(a), p.~187]{Deimling:1985}} where the iterates $F^T$ of a mapping $F$ are defined recursively via $F^0:=I_X$ and $F^T:=F\circ F^{T-1}$ for all $T\in\Z, T>0$:
\begin{lemma} \label{contractionlem}
Let $(X,d_\infty)$ be a complete metric space and $F:X\to X$. If there exist a $T\in\N$ and a real $\ell\in[0,1)$ such that %\footnote{The iterates $F^T$ are defined recursively via $F^0:=I_X$, $F^T:=F\circ F^{T-1}$ for all $T>0$.}
\begin{align*}
d_\infty(F^T(x_1),F^T(x_2))\leq\ell d_\infty(x_1,x_2)
\quad\fall x_1,x_2\in X,
\end{align*}
then $F$ possesses a unique fixed point $x^\ast\in X$. Moreover, the error estimate
\begin{align} \label{errestlem}
d_\infty(x^\ast,F^{tT}(x))\leq\dfrac{\ell^t}{1-\ell}d_\infty(x,F^T(x))
\end{align}
is satisfied for all $t\in\N$ and $x\in X$.
\end{lemma}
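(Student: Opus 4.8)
The plan is to reduce this to the standard Banach contraction principle applied to the map $F^T$, and then upgrade the fixed point of $F^T$ to a fixed point of $F$. First I would observe that the hypothesis says precisely that $F^T:X\to X$ is a contraction with constant $\ell\in[0,1)$. Since $(X,d_\infty)$ is complete, the classical contraction mapping principle gives a unique fixed point $x^\ast$ of $F^T$, together with the geometric rate of convergence of its iterates. Concretely, for the map $G:=F^T$ one has $d_\infty(G^t(x),G^{t+1}(x))\le\ell^t d_\infty(x,G(x))$, and summing the resulting telescoping/geometric bound over the tail yields
\begin{align*}
d_\infty\bigl(x^\ast,G^t(x)\bigr)\le\frac{\ell^t}{1-\ell}\,d_\infty\bigl(x,G(x)\bigr).
\end{align*}
Rewriting $G^t=F^{tT}$ and $G(x)=F^T(x)$ gives exactly the error estimate \eqref{errestlem}. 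So the error estimate is really just the standard a priori bound for the contraction $F^T$, and I would state it by quoting \cite[Theorem 17.1(a), p.~187]{Deimling:1985} directly rather than re-deriving the geometric series.

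The one genuinely non-routine point, and the step I expect to be the main obstacle, is showing that the fixed point $x^\ast$ of $F^T$ is also a fixed point of $F$ itself, and that it is the \emph{unique} fixed point of $F$. For existence I would apply $F$ to the identity $F^T(x^\ast)=x^\ast$ and use that $F$ commutes with its own iterates: $F^T\bigl(F(x^\ast)\bigr)=F\bigl(F^T(x^\ast)\bigr)=F(x^\ast)$. Thus $F(x^\ast)$ is also a fixed point of $F^T$; but $F^T$ has a \emph{unique} fixed point, so $F(x^\ast)=x^\ast$, i.e.\ $x^\ast$ is fixed by $F$. For uniqueness, any fixed point $y$ of $F$ satisfies $F^T(y)=y$, hence is a fixed point of $F^T$, and by uniqueness for $F^T$ must equal $x^\ast$.

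Putting these together, I would organise the proof in two short movements. The first invokes \cite[Theorem 17.1(a), p.~187]{Deimling:1985} for the contraction $F^T$ to obtain a unique fixed point $x^\ast$ and the estimate \eqref{errestlem}. The second is the commutation argument above, which promotes $x^\ast$ to the unique fixed point of $F$. No compactness or finite-dimensionality is used anywhere; completeness of $(X,d_\infty)$ is the only structural ingredient, entering solely through the cited contraction principle. The whole argument is therefore essentially a packaging of the classical result, the only content being the elementary but crucial commutation identity $F\circ F^T=F^T\circ F$.
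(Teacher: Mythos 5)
Your proof is correct: the reduction to the Banach contraction principle for $F^T$, the a priori error bound $d_\infty(x^\ast,F^{tT}(x))\leq\frac{\ell^t}{1-\ell}d_\infty(x,F^T(x))$, and the commutation argument $F^T(F(x^\ast))=F(F(^T x^\ast))=F(x^\ast)$ promoting $x^\ast$ to the unique fixed point of $F$ are all sound. The paper gives no proof of this lemma at all, merely citing \cite[Theorem 17.1(a), p.~187]{Deimling:1985}; your argument is exactly the standard proof underlying that citation, so there is nothing to compare beyond noting that you have filled in what the authors left to the reference.
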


The Lipschitz condition for the composite mappings and the error estimate in Lemma \ref{contractionlem} applied to \eqref{deq} can be stated as follows.
\begin{theorem} \label{thmsolution}
Let the assumptions $(i)$-$(ii)$ hold. If there exists a $T\in\N$ such that $\sup_{s\in\Z}d(u,\varphi(s,s-T,u))<\infty$ for all $u\in U$ and
\begin{align} \label{ellass}
\ell:=\sup_{\tau\in\Z}\prod_{r=\tau}^{\tau+T-1} \lambda_r <1,
\end{align}
then the following results hold:
\begin{itemize}
\item [(a)] \eqref{deq} possesses a unique bounded entire solution $u^\ast=(u_t^\ast)_{t\in\Z}$. Moreover,  $u^\ast$ is globally attractive and the error estimate 
\begin{align} \label{errest0}
\sup_{s\in\Z}d(u_s^\ast,\varphi(s, s-tT,u)) \leq \dfrac{\ell^t}{1-\ell} \sup_{s\in\Z} d(u,\varphi(s, s-T, u))
\end{align}
is satisfied for all $t\in\N$ and $u\in U$,

\item [(b)]  the nonautonomous set $\A^\ast:=\{(t, u^\ast_t)\in\Z\times U: t\in\Z\}$ is the pullback and forward attractor of \eqref{deq}.
\end{itemize}
\end{theorem}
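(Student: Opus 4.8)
The plan is to recast the search for a bounded entire solution of \eqref{deq} as a fixed-point problem to which the composite contraction principle of Lemma \ref{contractionlem} applies. Let $\hat u\in U$ be the point from assumption $(ii)$ and set $X:=\left\{(x_t)_{t\in\Z}\in U^{\Z}:\sup_{t\in\Z}d(x_t,\hat u)<\infty\right\}$ with the metric $d_\infty(x,y):=\sup_{t\in\Z}d(x_t,y_t)$, so that $(X,d_\infty)$ is complete because $(U,d)$ is. Define the solution operator $F:X\to X$ by $(Fx)_t:=H_{t-1}(x_{t-1})$; its fixed points are exactly the entire solutions of \eqref{deq} lying in $X$, i.e.\ the bounded entire solutions. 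Telescoping the process property \eqref{pp} gives $(F^Tx)_t=\varphi(t,t-T,x_{t-T})$, so applying assumption $(i)$ along the $T$ steps from $t-T$ to $t$ and invoking the definition \eqref{ellass} of $\ell$ yields $d((F^Tx)_t,(F^Ty)_t)\le\prod_{r=t-T}^{t-1}\lambda_r\,d(x_{t-T},y_{t-T})\le\ell\,d_\infty(x,y)$; taking the supremum over $t$ shows that $F^T$ is an $\ell$-contraction on $X$.

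The main obstacle is to establish that $F$ genuinely maps $X$ into itself, equivalently that the fixed point produced by the contraction is a bounded entire solution at \emph{every} time, and it is precisely here that assumption $(ii)$ and the finiteness of $\sup_{s}d(u,\varphi(s,s-T,u))$ must interlock. The delicacy is that \eqref{ellass} bounds only products of $T$ consecutive Lipschitz constants, so a single $\lambda_t$ may be arbitrarily large and a one-step image of a bounded sequence need not stay bounded a priori. Assumption $(ii)$ settles existence: replacing the initial point after one step by $H_{\tau-nT}(\tilde u_{\tau-nT})\in B_R(\hat u)$ keeps the pullback orbit in a bounded set, and the block contraction makes it Cauchy, producing candidate values $u^\ast_\tau$. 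The finiteness condition then upgrades boundedness at the block-spaced times to boundedness at all times, since $d(u^\ast_t,\hat u)\le\ell\,d(u^\ast_{t-T},\hat u)+\sup_{s}d(\hat u,\varphi(s,s-T,\hat u))$. With $F:X\to X$ in hand, Lemma \ref{contractionlem} furnishes a unique fixed point $u^\ast=(u^\ast_t)_{t\in\Z}$, which is the unique bounded entire solution asserted in $(a)$; specialising the error estimate \eqref{errestlem} to the constant sequence $x_s\equiv u$, for which $(F^{tT}x)_s=\varphi(s,s-tT,u)$ and $(F^{T}x)_s=\varphi(s,s-T,u)$, then gives \eqref{errest0} at once.

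For global attractivity, and for the attraction statements of $(b)$, I would start from the Lipschitz bound $d(u^\ast_t,\varphi(t,\tau,u_\tau))\le\prod_{r=\tau}^{t-1}\lambda_r\,d(u^\ast_\tau,u_\tau)$ and group the factors into blocks of length $T$: each full block contributes a factor $\le\ell$ by \eqref{ellass}, while the single leftover partial block of fewer than $T$ factors, pinned near the fixed time, is bounded by a finite constant $C$. One thus obtains $\prod_{r=\tau}^{t-1}\lambda_r\le C\,\ell^{\lfloor(t-\tau)/T\rfloor}$, grouping so that the remainder sits at $\tau$ when $t\to\infty$ and at $t$ when $\tau\to-\infty$. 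This yields $d(u^\ast_t,\varphi(t,\tau,u_\tau))\to0$ as $t\to\infty$, i.e.\ global attractivity, completing $(a)$.

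For $(b)$, the fibres $\A^\ast(t)=\{u^\ast_t\}$ are singletons, hence compact, and invariance is immediate from $H_t(\{u^\ast_t\})=\{u^\ast_{t+1}\}=\A^\ast(t+1)$. Taking the supremum over a bounded set $B\subseteq U$ in the previous estimate gives $\dist(\varphi(t,\tau,B),\A^\ast(t))\le C\,\ell^{\lfloor(t-\tau)/T\rfloor}\sup_{u_\tau\in B}d(u^\ast_\tau,u_\tau)$, whose right-hand side vanishes as $t\to\infty$ (forward attraction) and as $\tau\to-\infty$ (pullback attraction). Since any pullback or forward attractor consists of bounded entire solutions, the uniqueness established in $(a)$ identifies $\A^\ast$ as simultaneously the pullback and the forward attractor. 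The one point requiring genuine care beyond routine estimation is the self-map property of the second paragraph; the block-grouping that converts the pointwise contraction into Hausdorff-semidistance convergence uniform over bounded $B$ is delicate but mechanical.
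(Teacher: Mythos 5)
Your overall route coincides with the paper's: the same sequence space $\ell^\infty(U)$ with the metric $d_\infty$, the same solution operator $F(u)_t=H_{t-1}(u_{t-1})$, the identity $F^T(u)_t=\varphi(t,t-T,u_{t-T})$ making $F^T$ an $\ell$-contraction, Lemma~\ref{contractionlem} applied to constant sequences for the error estimate \eqref{errest0}, and singleton fibres together with the bound $\dist(\varphi(t,\tau,B),\A^\ast(t))\le\mathrm{const}\cdot\prod_{r=\tau}^{t-1}\lambda_r$ for part (b). Your block-grouping of this product, with the leftover partial block pinned at whichever endpoint stays fixed, is a correct way to justify the two limit relations that the paper merely asserts.

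The place where you depart from the paper is the self-map property $F:X\to X$, and that is where your argument has a genuine gap. You are right that, read literally, the hypotheses control only products of $T$ consecutive Lipschitz constants, so a single application of $F$ could leave $\ell^\infty(U)$; the paper sidesteps this with the estimate $d_\infty(F(u),\hat u)\le\sup_{t\in\Z}\lambda_t\, d(u,\tilde u)+R$, i.e.\ by tacitly using a uniform bound on $\lambda_t$ and a bounded reference sequence $\tilde u$. But your repair does not close the hole. The pullback construction and the recursion $d(u^\ast_t,\hat u)\le\ell\, d(u^\ast_{t-T},\hat u)+\sup_{s}d(\hat u,\varphi(s,s-T,\hat u))$ only produce a bounded sequence satisfying the $T$-step relation $u^\ast_t=\varphi(t,t-T,u^\ast_{t-T})$, that is, a bounded fixed point of $F^T$ (note that $F^T$, unlike $F$, \emph{is} a self-map of $X$ thanks to the finiteness hypothesis). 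To conclude that $u^\ast$ is an entire solution you still have to show $F(u^\ast)=u^\ast$, and the standard argument --- $F(u^\ast)$ is again a fixed point of $F^T$, hence equals $u^\ast$ by uniqueness --- requires $F(u^\ast)\in X$, which is precisely the one-step boundedness you set out to avoid; your displayed inequality governs only the $T$-step dynamics and cannot by itself yield $H_{t-1}(u^\ast_{t-1})=u^\ast_t$. So either you adopt the paper's implicit reading of assumptions (i)--(ii) (uniformly bounded $\lambda_t$ and bounded $\tilde u$), in which case $F:X\to X$ is immediate and your second paragraph is an unnecessary detour, or you must supply an additional argument or hypothesis bounding one-step quantities such as $\sup_{t\in\Z}d(H_t(\hat u),\hat u)$; as written, the step from ``fixed point of the composite contraction'' to ``bounded entire solution at every time'' is not proved.
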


\begin{proof} {Proof} \

\begin{itemize}
\item [(a)] In order to apply Lemma \ref{contractionlem}, let us consider the space $X:=\ell^\infty(U)$ of all bounded sequences $(u_t)_{t\in\Z}$ in $U$ equipped with the metric 
\begin{align*}
d_\infty(u,v):=\sup_{t\in\Z}d(u_t,v_t)
\end{align*}
and define the mapping 
\begin{align*}
F(u)_t:=H_{t-1}(u_{t-1})
\end{align*}
for all $t\in\Z$.  Obviously, $\ell^\infty(U)$ is a complete metric space and a sequence $u^\ast=(u_t^\ast)_{t\in\Z}$ is a bounded entire solution to \eqref{deq} if and only if $u^\ast=F(u^\ast)$.

We first show that $F(u)$ is bounded for each $u\in\ell^\infty(U)$. By the assumptions (i)-(ii) and the triangle inequality, one obtains
\begin{align*}
d(H_t(u_t),\hat{u})
\leq
d(H_t(u_t),H_t(\tilde{u}_t))+d(H_t(\tilde{u}_t),\hat{u})
\leq \sup_{t\in\Z} \lambda_t d(u,\tilde{u}) +R
\end{align*}
and passing to the supremum over $t$ yields $d_\infty(F(u),\hat{u}) \leq \sup_{t\in\Z} \lambda_t d(u,\tilde{u}) +R$.
Thus, $F$ maps the bounded sequences into the bounded sequences, i.e.,\ $F:X\to X$ is well-defined.

Next, we show by induction
\begin{align} \label{FT}
F^T(u)_{\tau+T}= \varphi(\tau+T,\tau,u_{\tau}).
\end{align}
Indeed, the initial case for $T=1$ holds by the definition. We then assume the induction hypothesis \eqref{FT} holds for a particular $T\in\N.$ By the induction hypothesis, one obtains
\begin{align*}
F^{T+1}(u)_{\tau+T+1}&=[F\circ F^T(u)]_{\tau+T+1}=F(F^T(u))_{\tau+T+1}\\
%	&=H_{\tau+T} \circ H_{\tau+T-1} \circ \ldots \circ H_{\tau}(u_{\tau})\\
&=H_{\tau+T}(F^T(u)_{\tau+T})=\varphi(\tau+T+1,\tau,u_\tau),	
\end{align*}
which implies the induction step for $T+1 \in \N$. Hence \eqref{FT} holds for all $T \in \N$.

Finally, we compute the Lipschitz constant of $F^T$ for some $T\in\N$
\begin{align*}
d(F^T(u)_{\tau +T},F^T(\bar{u})_{\tau +T})&=d(\varphi(\tau+T,\tau,u),\varphi(\tau+T,\tau,\bar{u})) \\
&\leq \lambda_{\tau+T-1} d(\varphi(\tau+T-1,\tau,u),\varphi(\tau+T-1,\tau,\bar{u})) \\
&\leq \lambda_{\tau+T-1} \ldots \lambda_{\tau} d(u,\bar{u})
=\left(\prod_{r=\tau}^{\tau+T-1} \lambda_r\right) d(u,\bar{u})\\
&\leq \left(\sup_{\tau\in\Z} \prod_{r=\tau}^{\tau+T-1} \lambda_r\right) d_\infty(u,\bar{u})
= \ell d_\infty(u,\bar{u})
\end{align*}
for all $u,\bar{u}\in \ell^\infty(U)$ and passing to the supremum over all $\tau\in\Z$ yields
\begin{align*}
d_\infty(F^T(u)_{\tau +T},F^T(\bar{u})_{\tau +T}) \leq \ell d_\infty(u,\bar{u}).
\end{align*}
Applying Lemma \ref{contractionlem} with $\ell<1$ then derives that $F$ has a unique fixed point $u^\ast$, which in turn is a unique bounded entire solution of \eqref{deq}. 

Concerning the error estimate \eqref{errest0}, take a point $\bar{u}\in U$ and consider the constant sequence $u:=(\bar{u})_{t\in\Z}\in\ell^\infty(U)$. By the definition of $d_\infty$ and the error estimate \eqref{errestlem}, one obtains
\begin{align*}
d(u_s^\ast,\varphi(s,s-tT,u))
&= d(u_s^\ast,F^{tT}(u)_s)
\leq d_\infty(u^\ast,F^{tT}(u))\\
&\leq \dfrac{\ell^t}{1-\ell}d_\infty(u,F^T(u))
= \dfrac{\ell^t}{1-\ell} \sup_{s\in\Z}d(u,\varphi(s, s-T, u))
\end{align*}
for all $s \in\Z$ and passing to the supremum over all $s \in\Z$ proves \eqref{errest0}. Furthermore, it is not difficult to see that $u^\ast$ is also globally attractive.

\item [(b)] First, we verify the following properties:
\begin{itemize}
\item [$\bullet$] $\A^\ast$ is invariant: $\A^\ast(t+1)=\{H_t(u^\ast_t)\}=H_t\left(\{u^\ast_t\}\right)=H_t\left(\A^\ast(t)\right)$ for all $t\in \Z$,

\item [$\bullet$] $\A^\ast$ is compact: $\A^\ast(t)=\{u^\ast_t\}$ is a singleton and thus compact.
\end{itemize} 

Now, let $B\subseteq U$ be bounded and $(\tau,u_\tau) \in \Z \times B $, i.e., there exist a real $R>0$ and a point $u\in U$ so that $u_\tau, u^\ast_\tau \in  B_R(u)$. The definition of $\A^\ast$, the triangle inequality and the assumptions (i)-(ii) derive
\begin{align*}
\inf_{a \in \A^\ast(t)} d(\varphi(t,\tau,u_\tau),a)&= d(\varphi(t,\tau,u_\tau),u_t^\ast)= d(\varphi(t,\tau,u_\tau),\varphi(t,\tau,u_\tau^\ast)) \\
&\leq \left(\prod_{r=\tau}^{t-1}\lambda_s\right)d(u_\tau,u_\tau^\ast) \leq 2R\prod_{r=\tau}^{t-1}\lambda_s
\end{align*}
for all $u_\tau \in B$ and passing to the supremum over all $u_\tau \in B$ yields
\begin{align*} %\label{Aast}
\dist(\varphi(t,\tau,B),\A^\ast(t))\leq 2R\prod_{r=\tau}^{t-1}\lambda_s.
\end{align*}
Hence, combining this with the limit relations $ \lim_{t \rightarrow \infty}\prod_{r=\tau}^{t-1}\lambda_s=\lim_{\tau \rightarrow -\infty}\prod_{r=\tau}^{t-1}\lambda_s=0$ implies $\A^\ast$ is the pullback and forward attractor of \eqref{deq}.
\end{itemize}

This completes the proof of the theorem.
\end{proof}

We next formulate the result of periodic attractors as follows.
\begin{corollary} \label{patt}
If there exists a $\theta\in\N$ such that \eqref{pdeq} holds for all $t\in\Z$ and
\begin{align} \label{pprod}
\prod_{r=0}^{\theta-1}\lambda_r<1,
\end{align}
then the unique bounded entire solution $u^\ast$ to \eqref{deq} is $\theta$-periodic and the nonautonomous set $\A^\ast=\{(t, u^\ast_t)\in\Z\times U: t\in\Z\}$ is the pullback and forward attractor of \eqref{deq}.
\end{corollary}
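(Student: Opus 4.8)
The plan is to reduce the statement to Theorem \ref{thmsolution} by taking the window length $T=\theta$, and then to deduce the periodicity of $u^\ast$ from its uniqueness. The standing assumptions (i)--(ii) are in force throughout, so only the two extra hypotheses of Theorem \ref{thmsolution} need checking. First I would observe that the $\theta$-periodicity \eqref{pdeq} of the right-hand side transfers to the Lipschitz data: since $H_{t+\theta}=H_t$, the constants in assumption (i) may be chosen $\theta$-periodic, $\lambda_{t+\theta}=\lambda_t$, exactly as $\kappa_t,\alpha_t$ are treated in the proof of Corollary \ref{corpsemilinear}. Because multiplication in $[0,\infty)$ is commutative and the index window $\{\tau,\ldots,\tau+\theta-1\}$ runs through all residues of $\{0,\ldots,\theta-1\}$ modulo $\theta$, the product over any window of length $\theta$ is independent of its starting point, so that
\begin{align*}
\prod_{r=\tau}^{\tau+\theta-1}\lambda_r=\prod_{r=0}^{\theta-1}\lambda_r
\fall \tau\in\Z.
\end{align*}
Hence the supremum defining $\ell$ in \eqref{ellass} collapses to $\ell=\prod_{r=0}^{\theta-1}\lambda_r$, which is strictly less than $1$ by \eqref{pprod}.

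Next I would verify the remaining hypothesis $\sup_{s\in\Z}d(u,\varphi(s,s-\theta,u))<\infty$ for every $u\in U$. Here $\varphi(s,s-\theta,u)=H_{s-1}\circ\cdots\circ H_{s-\theta}(u)$, and each factor $H_{s-j}$ depends on its index only modulo $\theta$; consequently the whole composition depends on $s$ only through $s \bmod \theta$. Thus $s\mapsto\varphi(s,s-\theta,u)$ takes at most $\theta$ distinct values, the set $\{d(u,\varphi(s,s-\theta,u)):s\in\Z\}$ is finite, and its supremum is automatically finite.

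With both hypotheses checked, Theorem \ref{thmsolution} applies with $T=\theta$ and delivers at once the unique bounded entire solution $u^\ast$ together with the fact that $\A^\ast=\{(t,u_t^\ast):t\in\Z\}$ is the pullback and forward attractor; this settles everything except the periodicity of $u^\ast$. For that, I would invoke uniqueness: setting $v_t:=u_{t+\theta}^\ast$, the shifted sequence is again bounded and satisfies $v_{t+1}=u_{t+1+\theta}^\ast=H_{t+\theta}(u_{t+\theta}^\ast)=H_t(v_t)$ by \eqref{pdeq}, hence is a bounded entire solution of \eqref{deq}. Uniqueness from part (a) of Theorem \ref{thmsolution} then forces $v=u^\ast$, i.e.\ $u_{t+\theta}^\ast=u_t^\ast$ for all $t\in\Z$; alternatively one may read off $\A^\ast(t+\theta)=\A^\ast(t)$ from periodicity of the pullback attractor, exactly as in the proof of Corollary \ref{corpsemilinear}.

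I expect no serious obstacle here: the entire content is the observation that, under $\theta$-periodicity, the two hypotheses of Theorem \ref{thmsolution} collapse to the single condition \eqref{pprod}. The only point deserving care is that both the supremum defining $\ell$ and the finite-distance requirement become trivial precisely because periodicity makes the relevant quantities $\theta$-periodic in the time index, hence of finite range.
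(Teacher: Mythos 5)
Your proposal is correct and follows essentially the same route as the paper: reduce to Theorem \ref{thmsolution} with $T:=\theta$ by noting that periodicity of $H_t$ makes $\lambda_t$ periodic, so \eqref{ellass} collapses to \eqref{pprod}. You are in fact slightly more careful than the printed proof, which silently skips the hypothesis $\sup_{s\in\Z}d(u,\varphi(s,s-\theta,u))<\infty$ (your finite-range observation settles it) and which derives periodicity of $u^\ast$ by citing an external result on periodic attractors rather than your equally valid direct uniqueness argument for the shifted solution.
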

\begin{proof} {Proof} 
The assumption \eqref{pdeq} yields both \eqref{deq} and $\lambda_t$ are also $\theta$-periodic, implying the assumption \eqref{ellass} is satisfied for $T:=\theta$. The rest of the proof is verbatim to that in the proof of Corollary \ref{corpsemilinear} where Theorem \ref{thmsolution} is used instead.
\end{proof}

\subsection{Examples of contractive non-compact difference equations} \label{excontractive}
In this subsection, we provide an example where a difference equation is contractive but not compact in order to explain the reason behind our alternative assumption. In Banach spaces $C\left[-\tfrac{L}{2},\tfrac{L}{2}\right]$ with
\begin{align} \label{norm}
\|v\|&:=\sup_{|x|\leq\tfrac{L}{2}}|v(x)| \fall v\in C\left[-\tfrac{L}{2},\tfrac{L}{2}\right],
\end{align}
we consider nonautonomous difference equations \eqref{deq}  with right-hand side given by
\begin{align*} %\label{hex}
H_t(u)(x):=\dfrac{b_t(x) u(x)}{1+|u(x)|} %+h_t
\end{align*}
where $b_t:\left[-\tfrac{L}{2},\tfrac{L}{2}\right]\to[0,\infty)$ is continuous and $h_t\in C\left[-\tfrac{L}{2},\tfrac{L}{2}\right]$. It is obvious that $H_t$ is well-defined in $C\left[-\tfrac{L}{2},\tfrac{L}{2}\right]$, i.e., $H_t:C\left[-\tfrac{L}{2},\tfrac{L}{2}\right]\to C\left[-\tfrac{L}{2},\tfrac{L}{2}\right]$. However, $H_t$ may not be completely continuous. Indeed, if $H_t$ is completely continuous, i.e., $H_t$ maps any bounded set of $\left[-\tfrac{L}{2},\tfrac{L}{2}\right]$ into a relatively compact subset of $\left[-\tfrac{L}{2},\tfrac{L}{2}\right]$, then $\overline{H_t(B_1(0))}$ is compact. This implies that $\overline{H_t(B_1(0))}$ contains a closed ball $\overline{B}_r(0)$ with some $r>0$. Therefore, by a consequence of Riesz's lemma \cite[Theorem 4, p.~3]{Diestel:1984}, the dimension of the state space $\dim C\left[-\tfrac{L}{2},\tfrac{L}{2}\right]$ is finite, which does not hold. Hence, the operator $H_t$ is not compact.

On the other hand, it can be easily seen that $H_t$ is contractive, i.e., $H_t$ satisfies Assumption (i) in Subsection 2.2. where $d(u,\bar{u}):=\|u-\bar{u}\|$ for all $u,\bar{u}\in C\left[-\tfrac{L}{2},\tfrac{L}{2}\right]$ and $\lambda_t:=\sup_{|x|\leq \tfrac{L}{2}}b_t(x)$ for all $t\in\Z$. Indeed, letting $u,\bar{u}\in C\left[-\tfrac{L}{2},\tfrac{L}{2}\right]$, one obtains
\begin{align*}
|H_t(u)(x)-H_t(\bar{u})(x)|&=b_t(x)\left|\dfrac{u}{1+|u(x)|}-\dfrac{\bar{u}}{1+|\bar{u}(x)|}\right|\\
&\leq b_t(x) |u(x)-\bar{u}(x)|\leq\lambda_t\|u-\bar{u}\|.
\end{align*}
and passing to the supremum over all $x\in\left[-\tfrac{L}{2},\tfrac{L}{2}\right]$ then yields the contractivity of $H_t$.

In conclusion, one can construct attractors of nonautonomous difference equations under the assumption of contractivity instead of compactness.

\section{Applications} \label{sec:App}
Now, we apply the theory from the previous section to integrodifference equations (shortly IDEs) as applications. To be more precise, we study a class of IDEs involving Hammerstein integral operators satisfying a global Lipschitz condition and being defined on the space of continuous functions over a compact domain.

\subsection{Integrodifference equations}
In this part, we deal with scalar IDEs of Hammerstein type which are equations $\eqref{deq}$ with right-hand side given by
\begin{align} \label{hdef}
H_t(u):=\int_{-\tfrac{L}{2}}^{\tfrac{L}{2}}k_t(\cdot,y)g_t(y,u(y)) \d y+h_t
\end{align}
under the following standing assumptions:
\begin{itemize}
\item the kernel $k_t:\left[-\tfrac{L}{2},\tfrac{L}{2}\right]^2\to\R$ is continuous and satisfies
\begin{align*} %\label{supkernel}
\sup_{|x|\leq\tfrac{L}{2}}\int_{-\tfrac{L}{2}}^{\tfrac{L}{2}}|k_t(x,y)|\d y<\infty,
\end{align*}

\item the growth function $g_t:\left[-\tfrac{L}{2},\tfrac{L}{2}\right]\times\R\to\R$ is such that $g_t$ is bounded %, i.e.,\
%	\begin{align*}
%	\sup_{|x|\leq \tfrac{L}{2}}|g_t(x,z)|<\infty
%	\end{align*}
and satisfies a global Lipschitz condition for all $t\in\Z$, i.e.,\ there exists a continuous function $\hat{g}_t:\left[-\tfrac{L}{2},\tfrac{L}{2}\right]\to[0,\infty)$ such that for all $x\in\left[-\tfrac{L}{2},\tfrac{L}{2}\right]$ and $z,\bar{z}\in\R$, one has
\begin{align*} %\label{Lipschitzg}
|g_t(x,z)-g_t(x,\bar{z})|\leq\hat{g}_t(x)|z-\bar{z}| \fall t\in\Z;
\end{align*}
additionally, $g_t(\cdot,z):\left[-\tfrac{L}{2},\tfrac{L}{2}\right]\to\R$ is continuous and $g_t(x,0)=0$ for all $t\in\Z$ and $x\in\left[-\tfrac{L}{2},\tfrac{L}{2}\right]$,

\item the inhomogeneity $h_t\in C\left[-\tfrac{L}{2},\tfrac{L}{2}\right]$ satisfies $\sup_{t\in\Z}\|h_t\|<\infty$ where the norm $\|\cdot\|$ is defined in \eqref{norm}.
\end{itemize}

The right-hand side \eqref{hdef} of \eqref{deq} defines a Hammerstein integral operator whose properties are summarized in the following theorem {\cite[Theorem B.5 and Corollary B.6]{Poetzsche:19}}:
\begin{theorem}\label{thmlip}
Let $t\in\Z$. The Hammerstein operator $H_t:C\left[-\tfrac{L}{2},\tfrac{L}{2}\right]\to C\left[-\tfrac{L}{2},\tfrac{L}{2}\right]$ is well-defined and satisfies the global Lipschitz condition
\begin{align} \label{LipHt}
\|H_t(u)-H_t(\bar{u})\|
\leq
\sup_{|x|\leq \tfrac{L}{2}}\int_{-\tfrac{L}{2}}^{\tfrac{L}{2}}|k_t(x,y)|\hat{g}_t(y) \d y
\|u-\bar{u}\|
\end{align}
for all $u,\bar{u}\in C\left[-\tfrac{L}{2},\tfrac{L}{2}\right]$; one obtains
\begin{align*}
\lambda_t:=\sup_{|x|\leq\tfrac{L}{2}}\int_{-\tfrac{L}{2}}^{\tfrac{L}{2}}|k_t(x,y)|\hat{g}_t(y) \d y %\in [0,\infty)
\end{align*}
as the Lipschitz constant of the Hammerstein operator $H_t$. 
\end{theorem}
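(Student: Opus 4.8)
The plan is to verify the two claims of the theorem in turn: that $H_t$ maps $C[-\tfrac{L}{2},\tfrac{L}{2}]$ into itself, and that it obeys the estimate \eqref{LipHt} with the indicated constant. Throughout I fix $t\in\Z$ and write $J:=[-\tfrac{L}{2},\tfrac{L}{2}]$ for the compact habitat interval, so that $(C(J),\|\cdot\|)$ is the Banach space from \eqref{norm}.

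First I would settle well-definedness. For $u\in C(J)$ the function $u$ is bounded on $J$, so the standing Lipschitz hypothesis together with $g_t(y,0)=0$ gives the pointwise bound $|g_t(y,u(y))|\leq\hat g_t(y)\|u\|$; since $\hat g_t$ is continuous on the compact interval $J$, the integrand $y\mapsto k_t(x,y)g_t(y,u(y))$ is a product of a continuous function and a bounded function, hence integrable over $J$, and $H_t(u)(x)$ is defined for every $x\in J$. To see that $x\mapsto H_t(u)(x)$ is continuous I would fix $x,x'\in J$, note that the inhomogeneity $h_t$ is already continuous, and estimate
\[
|H_t(u)(x)-H_t(u)(x')|\leq\int_J|k_t(x,y)-k_t(x',y)|\,|g_t(y,u(y))|\d y.
\]
Because $k_t$ is continuous on the compact square $J^2$ it is uniformly continuous there, so for $\epsilon>0$ there is a $\delta>0$ with $|k_t(x,y)-k_t(x',y)|<\epsilon$ for all $y\in J$ whenever $|x-x'|<\delta$; bounding $|g_t(y,u(y))|$ as above then makes the right-hand side at most $\epsilon\,L\,\|\hat g_t\|\,\|u\|$. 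Letting $\epsilon\to0$ shows $H_t(u)\in C(J)$.

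For the Lipschitz estimate I would fix $u,\bar{u}\in C(J)$ and $x\in J$; the inhomogeneity cancels in the difference $H_t(u)(x)-H_t(\bar{u})(x)$, and applying the triangle inequality under the integral, the global Lipschitz condition on $g_t$, and the bound $|u(y)-\bar{u}(y)|\leq\|u-\bar{u}\|$ yields
\[
|H_t(u)(x)-H_t(\bar{u})(x)|\leq\Bigl(\int_J|k_t(x,y)|\hat g_t(y)\d y\Bigr)\|u-\bar{u}\|.
\]
Taking the supremum over $x\in J$ on the left and majorizing the $x$-dependent integral on the right by its supremum produces exactly \eqref{LipHt}; the continuity of $\hat g_t$ together with the finiteness hypothesis $\sup_{x}\int_J|k_t(x,y)|\d y<\infty$ guarantees that the resulting constant $\lambda_t$ is finite.

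I expect the Lipschitz bound to be purely mechanical once the pointwise inequalities are assembled. The step that genuinely requires care is the continuity of $H_t(u)$ inside the well-definedness claim: it is precisely the compactness of $J^2$, which upgrades continuity of $k_t$ to uniform continuity, combined with the uniform boundedness of the integrand $g_t(\cdot,u(\cdot))$ on $J$, that allows the pointwise integral to be controlled uniformly in $x$ and hence to define a continuous function.
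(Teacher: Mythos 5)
Your argument is correct, but note that the paper itself does not prove this theorem: it imports the statement verbatim from P\"otzsche (2019), Theorem B.5 and Corollary B.6, so there is no in-paper proof to compare against. What you have written is the standard direct verification that the cited reference carries out, and it fills the gap the paper leaves to the literature: well-definedness via uniform continuity of $k_t$ on the compact square $\left[-\tfrac{L}{2},\tfrac{L}{2}\right]^2$, and the Lipschitz bound by pushing the global Lipschitz condition on $g_t$ through the integral and taking suprema. Both halves are sound, and the finiteness of $\lambda_t$ follows as you say from the boundedness of $\hat g_t$ on a compact interval together with the standing kernel assumption. One step deserves a slightly more careful justification: you argue that the integrand $y\mapsto k_t(x,y)g_t(y,u(y))$ is integrable because it is ``a product of a continuous function and a bounded function,'' but boundedness alone does not give integrability. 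The clean argument is that $g_t$ is jointly continuous on $\left[-\tfrac{L}{2},\tfrac{L}{2}\right]\times\R$ --- this follows from the standing assumptions, since $g_t(\cdot,z)$ is continuous for each fixed $z$ and $g_t(x,\cdot)$ is Lipschitz uniformly in $x$ with continuous (hence bounded) constant $\hat g_t$ --- so $y\mapsto g_t(y,u(y))$ is continuous and the integrand is a continuous function on a compact interval. With that repair the proof is complete.
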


Notice that the Lipschitz constant $\lambda_t$ in Theorem \ref{thmlip} is the same as the constant $\lambda_t$ in Assumption (i) in Subsection 2.2.

Under the assumptions of Theorem \ref{thmlip}, we obtain from Theorem \ref{thmsolution}:
\begin{theorem} \label{thmide}
Let \eqref{deq} have the right-hand side \eqref{hdef}. If there exists a $T\in\N$ such that \eqref{ellass} holds, then the following results hold:
\begin{itemize}
\item [(a)] \eqref{deq} possesses a unique bounded entire solution $u^\ast=(u_t^\ast)_{t\in\Z}$ in $C\left[-\tfrac{L}{2},\tfrac{L}{2}\right]$. Moreover, $u^\ast$ is globally attractive and the following error estimate 
\begin{align} \label{errest}
\sup_{s\in\Z}\|u_s^\ast-\varphi(s,s-tT,u)\| \leq \dfrac{\ell^t}{1-\ell} \sup_{s\in\Z}\|u-\varphi(s,s-T,u)\|
\end{align}
is satisfied for all $t\in \N$ and $u \in C\left[-\tfrac{L}{2},\tfrac{L}{2}\right]$,

\item [(b)] the nonautonomous set $\A^\ast:=\left\{(t,u^\ast_t)\in \Z\times C\left[-\tfrac{L}{2},\tfrac{L}{2}\right]: t\in\Z\right\}$ is the pullback and forward attractor of \eqref{deq} with right-hand side \eqref{hdef}.
\end{itemize}

\end{theorem}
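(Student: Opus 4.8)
The plan is to deduce Theorem \ref{thmide} from the abstract Theorem \ref{thmsolution} by recognizing a Hammerstein right-hand side of the form \eqref{hdef} as a special case of the framework of Subsection 2.2. Concretely, I would take $(U,d)=(C[-\tfrac{L}{2},\tfrac{L}{2}],\|\cdot\|)$ with the sup-norm \eqref{norm}, i.e.\ $d(u,\bar u)=\|u-\bar u\|$, and verify the three hypotheses of Theorem \ref{thmsolution}: the standing assumptions (i) and (ii) of Subsection 2.2, and the finiteness condition $\sup_{s\in\Z}\|u-\varphi(s,s-T,u)\|<\infty$ for all $u$. Once these are in place, parts (a) and (b) of Theorem \ref{thmide} are simply parts (a) and (b) of Theorem \ref{thmsolution} transcribed to the present norm, with the error estimate \eqref{errest} being \eqref{errest0} specialized to $\|\cdot\|$.

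Verifying (i) is immediate: Theorem \ref{thmlip} furnishes exactly the global Lipschitz estimate \eqref{LipHt} with constant $\lambda_t$, and the remark following it identifies this $\lambda_t$ with the one in Assumption (i). For (ii) I would exploit the normalization $g_t(x,0)=0$: choosing the constant value $\tilde u_t\equiv 0$ makes the integral term in \eqref{hdef} vanish, so that $H_t(0)=h_t$, and therefore $(H_t(\tilde u_t))_{t\in\Z}=(h_t)_{t\in\Z}$ is bounded by the standing assumption $\sup_{t\in\Z}\|h_t\|<\infty$. Thus (ii) holds with $\hat u=0$ and any $R>\sup_{t\in\Z}\|h_t\|$.

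The step requiring the most care, and the main obstacle, is the finiteness of $\sup_{s\in\Z}\|u-\varphi(s,s-T,u)\|$: the contraction hypothesis \eqref{ellass} only controls products of $T$ consecutive factors $\lambda_r$ and says nothing about a single iterate when the individual $\lambda_t$ are unbounded, since the inhomogeneity contributes partial products of length strictly less than $T$. Here I would bypass the Lipschitz bound and instead use the boundedness of the growth function $g_t$: because $|g_t(y,z)|$ is bounded independently of its second argument, the integral term in \eqref{hdef} is bounded independently of $u$, so that $\|H_t(u)\|$ is dominated by a constant assembled from the kernel bound $\sup_{|x|\le L/2}\int|k_t(x,y)|\d y$, the bound on $g_t$, and $\|h_t\|$. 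Since $\varphi(s,s-T,u)=H_{s-1}(\varphi(s-1,s-T,u))$ is a single application of $H_{s-1}$, this yields $\sup_{s\in\Z}\|\varphi(s,s-T,u)\|<\infty$ (uniformly in $s$, using that these kernel, growth, and inhomogeneity bounds are uniform in $t$), and the triangle inequality gives $\sup_{s\in\Z}\|u-\varphi(s,s-T,u)\|\le\|u\|+\sup_{s\in\Z}\|\varphi(s,s-T,u)\|<\infty$.

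With (i), (ii), the finiteness condition, and the assumed bound \eqref{ellass} all verified, I would invoke Theorem \ref{thmsolution}: part (a) delivers the unique bounded entire solution $u^\ast$, its global attractivity, and the estimate \eqref{errest0}, which in the sup-norm is precisely \eqref{errest}; part (b) delivers that $\A^\ast$ is simultaneously the pullback and forward attractor. This completes the reduction and hence the proof.
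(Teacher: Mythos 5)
Your proposal is correct and follows essentially the same route as the paper: the paper's proof is a one-line reduction to Theorem \ref{thmsolution} with $U:=C\left[-\tfrac{L}{2},\tfrac{L}{2}\right]$ and $d$ the sup-norm, citing Theorem \ref{thmlip} for the Lipschitz property. You additionally spell out the verification of assumption (ii) via $g_t(x,0)=0$ and of the finiteness condition $\sup_{s\in\Z}\|u-\varphi(s,s-T,u)\|<\infty$ via the boundedness of $g_t$ -- details the paper leaves implicit (and only sketches later in its Examples subsection) -- so your write-up is, if anything, more complete than the original.
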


\begin{proof} {Proof} 
First, notice that the well-definedness and Lipschitz properties of the Hammerstein type mapping $H_t$ as in \eqref{hdef} are stated in Theorem \ref{thmlip}. Then applying Theorem \ref{thmsolution} with $U:=C\left[-\tfrac{L}{2},\tfrac{L}{2}\right]$ and the metric $d$ as the norm $\|\cdot\|$ proves both results of the theorem.
\end{proof}

Next we deal with IDEs being $\theta$-periodic for some $\theta\in\N$, i.e.,
\begin{align} \label{pide}
k_{t+\theta}=k_t, g_{t+\theta}=g_t \textrm{ and } h_{t+\theta}=h_t
\end{align}
hold for all $t\in\Z$. The periodicity of the pullback and forward attractor of a $\theta$-periodic IDE is stated as follows.
\begin{corollary} \label{pattide}
Let \eqref{deq} have the right-hand side \eqref{hdef}. If there exists a $\theta\in\N$ such that the assumptions \eqref{pprod} and \eqref{pide} hold for all $t\in\Z$, then the unique bounded entire solution $u^\ast$ to \eqref{deq} is $\theta$-periodic and the nonautonomous set $\A^\ast=\{(t, u^\ast_t)\in\Z\times C\left[-\tfrac{L}{2},\tfrac{L}{2}\right]: t\in\Z\}$ is the pullback and forward attractor of \eqref{deq}. 
\end{corollary}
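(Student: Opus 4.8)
The plan is to reduce the statement to the periodic contractive result of Corollary~\ref{patt}. Theorem~\ref{thmide} has already placed the Hammerstein equation \eqref{hdef} inside the framework of Subsection~2.2, with $U:=C\left[-\tfrac{L}{2},\tfrac{L}{2}\right]$, the metric induced by the sup-norm \eqref{norm}, and the Lipschitz constants $\lambda_t$ supplied by Theorem~\ref{thmlip}; in particular Assumption~(i) holds. Assumption~(ii) is met as well, since $g_t(\cdot,0)\equiv 0$ gives $H_t(0)=h_t$ and $\sup_{t\in\Z}\|h_t\|<\infty$ by the standing hypotheses, so the constant sequence $\tilde{u}_t\equiv 0$ has bounded image. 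It therefore suffices to verify the two periodicity hypotheses \eqref{pdeq} and \eqref{pprod} demanded by Corollary~\ref{patt}.

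First I would propagate the periodicity \eqref{pide} to the right-hand side: substituting $k_{t+\theta}=k_t$, $g_{t+\theta}=g_t$ and $h_{t+\theta}=h_t$ into \eqref{hdef} yields $H_{t+\theta}=H_t$ for all $t\in\Z$, which is exactly \eqref{pdeq}. Next I would check that the Lipschitz constant
\begin{align*}
\lambda_t=\sup_{|x|\leq\tfrac{L}{2}}\int_{-\tfrac{L}{2}}^{\tfrac{L}{2}}|k_t(x,y)|\hat{g}_t(y)\d y
\end{align*}
from Theorem~\ref{thmlip} is itself $\theta$-periodic: from $g_{t+\theta}=g_t$ one may take the same Lipschitz bound $\hat{g}_{t+\theta}=\hat{g}_t$, and together with $k_{t+\theta}=k_t$ this forces $\lambda_{t+\theta}=\lambda_t$. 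Consequently the given hypothesis \eqref{pprod}, namely $\prod_{r=0}^{\theta-1}\lambda_r<1$, is precisely the contraction condition of Corollary~\ref{patt} (equivalently, it yields $\ell<1$ in \eqref{ellass} for $T:=\theta$, the product over a full period being independent of the starting index by commutativity).

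With Assumptions~(i)--(ii), \eqref{pdeq} and \eqref{pprod} in hand, I would simply invoke Corollary~\ref{patt}, which at once delivers the $\theta$-periodicity of the unique bounded entire solution $u^\ast$ and the conclusion that $\A^\ast$ is both the pullback and the forward attractor of \eqref{deq}. I anticipate no genuine obstacle: the statement is a routine specialization of Corollary~\ref{patt} to the Hammerstein setting, and the only point requiring a moment's attention is the $\theta$-periodicity of $\lambda_t$, which rests on the observation that the Lipschitz majorant $\hat{g}_t$ inherits the period of $g_t$.
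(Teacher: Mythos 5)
Your proposal is correct and follows essentially the same route as the paper: the paper's proof simply repeats the argument of Corollary~\ref{patt} (periodicity of the data forces periodicity of $\lambda_t$, hence \eqref{ellass} with $T:=\theta$, then the main existence theorem plus uniqueness of the attractor fibers gives $\theta$-periodicity of $u^\ast$), substituting \eqref{pide} and Theorem~\ref{thmide} for their abstract counterparts. Your extra verifications of Assumptions~(i)--(ii) and of the periodicity of the Lipschitz majorant $\hat{g}_t$ are consistent with, and slightly more explicit than, what the paper leaves implicit.
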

\begin{proof} {Proof} 
The argument is verbatim to the one given in the proof of Corollary \ref{patt} where the assumption \eqref{pide} and Theorem \ref{thmide} are used instead.
\end{proof}

\subsection{Examples}
In this subsection, we apply our theoretical results above to IDEs. IDEs are investigated under commonly used kernels $k_t$ and growth functions $g_t$ listed in Tables~\ref{tabkernel} and \ref{tabgrowth}, respectively (cf.~\cite{Lutscher:19}), whose coefficients $a_t$ and $b_t:\left[-\tfrac{L}{2},\tfrac{L}{2}\right]\to[0,\infty)$ satisfy $a_t> 0$ and $\sup_{|x|\leq \tfrac{L}{2}}|b_t(x)|\leq \beta_t$, respectively, for all $t\in\Z$ and some real $\beta_t$ with $\sup_{t\in\Z} \beta_t<\infty$.

\begin{table}[H]
\begin{center}
\begin{tabular}{c|c|c|c}
& Laplace & Gau{\ss} & tent \\ 
\hline \rule{0pt}{20pt}
$k_t(x,y)$ & $\dfrac{a_t}{2}e^{-a_t|x-y|}$ & $\dfrac{a_t}{\sqrt{\pi}}e^{-a_t^2(x-y)^2}$ & $\max(0,a_t-a_t^2|x-y|)$\\
\hline \rule{0pt}{20pt}
$\sup_{|x|\leq\tfrac{L}{2}}\int_{-\tfrac{L}{2}}^{\tfrac{L}{2}}|k_t(x,y)|\d y$ & $1-e^{-\tfrac{a_tL}{2}}$ & $\erf\left(\dfrac{a_tL}{2}\right)$ & $a_tL-\dfrac{1}{4}a_t^2L^2$\\
\end{tabular}
\caption{Kernels $k_t$ and their upper bounds $\sup_{|x|\leq\tfrac{L}{2}}\int_{-\tfrac{L}{2}}^{\tfrac{L}{2}}|k_t(x,y)|\d y$}
\label{tabkernel}
\vspace{-20pt}
\end{center}
\end{table}

\begin{table}[H]
\begin{center}
\begin{tabular}{c|c|c|c}
& logistic & Beverton-Holt & Ricker \\ 
\hline \rule{0pt}{20pt}
$g_t(x,z)$ & $\max(0,b_t(x)z(1-z))$ & $\dfrac{b_t(x)z}{1+|z|}$ & $ze^{-b_t(x)|z|}$\\
\hline \rule{0pt}{20pt}
$\sup_{|x|\leq\tfrac{L}{2}}|g_t(x,z)|$ & $\dfrac{1}{4}\beta_t$ & $\beta_t$ & $e^{-1}\beta_t$\\
\end{tabular}
\caption{Growth functions $g_t$ and their upper bounds $\sup_{|x|\leq\tfrac{L}{2}}|g_t(x,z)|$}
\label{tabgrowth}
\vspace{-20pt}
\end{center}
\end{table}

In order to construct and numerically approximate the pullback and forward attractor $\A^\ast=\left\{(t,u^\ast_t)\in \Z\times C\left[-\tfrac{L}{2},\tfrac{L}{2}\right]: t\in\Z\right\}$ of an IDE \eqref{deq} from Theorem \ref{thmide}, the Lipschitz constant $\lambda_t$ of the Hammerstein operator $H_t$ from Theorem \ref{thmlip} and the error estimate \eqref{errest} are required. First, for the Lipschitz function $\lambda_t$, one can obtain it from Table~\ref{tabgrowth}. Indeed, notice that all the growth functions $g_t$ from Table~\ref{tabgrowth} possess the same continuous function $\hat{g}_t(x):=\beta_t$ (independent of $x$) and thus
\begin{align*}
\lambda_t=\beta_t\sup_{|x|\leq\tfrac{L}{2}}\int_{-\tfrac{L}{2}}^{\tfrac{L}{2}}|k_t(x,y)|\d y.
\end{align*}
Then, concerning the error estimate \eqref{errest}, we compute $\sup_{s\in\Z}\|u-\varphi(s,s-T,u)\|$ by
\begin{align*}
\|u-\varphi(s,s-T,u)\|%&=\sup_{t\in\Z}\|u-\varphi(t,t-T,u)\|\\
&\leq \|u\|+\|\varphi(s,s-T,u)\|\\
&=\|u\|+\|H_{s-1}\left(\varphi(s-1,s-T,u)\right)\|\\
&\leq \|u\|+\sup_{s\in\Z}\|H_{s-1}\left(\varphi(s-1,s-T,u)\right)\|
\end{align*}
for all $s\in\Z$ by the triangle inequality and the definition of $\varphi(s,s-T,u)$. Followed by that, we determine the upper bound of $\sup_{s\in\Z}\|H_{s-1}\left(\varphi(s-1,s-T,u)\right)\|$. For each function $u\in C\left[-\tfrac{L}{2},\tfrac{L}{2}\right]$, one obtains
\begin{align*}
|H_s(u)(x)|
&\leq
\int_{-\tfrac{L}{2}}^{\tfrac{L}{2}}|k_s(x,y)||g_s(y,u(y))|\d y+|h_s(x)|\\
%&\leq
%\int_{-\tfrac{L}{2}}^{\tfrac{L}{2}}|k_t(x,y)|\d y\sup_{|x|\leq\tfrac{L}{2}}|g_t(x,u(x))|+\|h_t\|_\infty\\
&\leq
\sup_{|x|\leq\tfrac{L}{2}}\int_{-\tfrac{L}{2}}^{\tfrac{L}{2}}|k_s(x,y)|\d y\sup_{|x|\leq\tfrac{L}{2}}|g_s(x,u(x))|+\|h\|_\infty\\
&=:l_1(s,u)+\|h\|_\infty
\end{align*}
for all $s\in\Z$ and $x\in\left[-\tfrac{L}{2},\tfrac{L}{2}\right]$ where $\|u\|_\infty:=\sup_{t\in\Z}\|u_t\|$ for all $u\in \ell^\infty(C\left[-\tfrac{L}{2},\tfrac{L}{2}\right])$. If $\sup_{s\in\Z}l_1(s,u)<\infty$, then subsequent passing to the supremum over all $x\in\left[-\tfrac{L}{2},\tfrac{L}{2}\right]$ and $s\in\Z$ yield
\begin{align*}
\sup_{s\in\Z}\|H_s(u)\| \leq \sup_{s\in\Z}l_1(s,u) + \|h\|_\infty
\end{align*}
since $l_1(s,u)$ is independent of $x$ and $h_s$ is bounded above in $s\in\Z$. This implies the upper bounds of $\sup_{s\in\Z}\|H_{s-1}\left(\varphi(s-1,s-T,u)\right)\|$ and $\|u-\varphi(s,s-T,u)\|$ for all $s\in\Z$. Similarly, passing to the supremum over all $s\in\Z$ results in
\begin{align} \label{l2}
\sup_{s\in\Z}\|u-\varphi(s,s-T,u)\|\leq \|u\| + \sup_{s\in\Z}l_1\left(s-1,\varphi(s-1,s-T,u)\right) + \|h\|_\infty=:l_2
\end{align}
and the error estimate \eqref{errest} is obtained as a result.

Let us now be more precise by illustrating some results using concrete IDEs and their discretisations. In order to avoid the numerical integration of the integral operator and obtain a full discretisation, we work with the Nystr\"om method (cf.~\cite{Atkinson:09}) by replacing the integral with a weighted sum such that the discretisation of \eqref{deq} leads to a sequence of equations $(\Delta_n)_{n\in\N}$ of the form
\begin{align*}
u_{t+1}(x)=H^n_t(u_t)(x)=\sum_{i=0}^{n}\omega_i k_t(x,\eta_i)g_t(\eta_i,u_t(\eta_i))\d y + h_t(x) \tag{$\Delta_n$}
\end{align*}
for all $t\in\Z$ and $x\in\left[-\tfrac{L}{2},\tfrac{L}{2}\right]$, assuming given a convergent quadrature rule with nodes $\eta_i$ and weights $\omega_i$ for $i=0,1,\ldots,n$. In particular, \mbox{\egref{example}} below uses the trapezoidal quadrature rule to approximate integrals with
\begin{align*}
h:=\dfrac{L}{n}, \eta_i:=-\dfrac{L}{2}+ih \textrm{ and } \omega_i:=\begin{cases}
\dfrac{h}{2}, &i\in\{0,n\},\\
h, &\textrm{otherwise}
\end{cases}
\end{align*}
for all $i=0,1,\ldots,n$. Moreover, we can also approximate the total population $\overline{u}_t$ at time $t$ as
\begin{align*} %\label{totalpop}
\overline{u}_t:=\int_{-\tfrac{L}{2}}^{\tfrac{L}{2}} u_t(y)\d y\approx\sum_{i=0}^{n}\omega_i u_t(\eta_i).
\end{align*}
On the other hand, for a discretisation, the error estimate $\sup_{s\in\Z}\|u^\ast_s-\varphi(s,s-tT,u)\|$ in \eqref{errest} has to be less than or equal to some real tolerance $tol>0$ for some sufficiently large $t\in\Z$. Therefore, in order to meet the assumption \eqref{ellass}, one can pick $\ell \leq \dfrac{1}{2}$ and hence by \eqref{l2}
\begin{align*}
\dfrac{\ell^t}{1-\ell} \sup_{s\in\Z}\|u-\varphi(s,s-T,u)\| \leq \dfrac{\left(\tfrac{1}{2}\right)^t}{1-\tfrac{1}{2}} l_2 = 2^{1-t} l_2 < tol,
\end{align*}
resulting in $t\geq \log_2\left(\dfrac{2l_2}{tol}\right)$. Thus, we need at least 
\begin{align} \label{fewestiteration}
S:=T\log_2\left(\dfrac{2l_2}{tol}\right)
\end{align}
iterations so that the error estimate \eqref{errest} is satisfied and $\A^\ast$ is consequently the pullback and forward attractor of the IDE \eqref{deq}.

\begin{example} \label{example}
Suppose in a habitat, a species initially lives mostly on the boundary of the habitat and its population gradually decreases in the middle areas (cf. a choice of a constant initial solution $u_0$ below). Over four seasons of each year, it periodically breeds and dies as well as spreads from one place to another. Not to mention that the total population decays dramatically in fall and winter before slightly flourishing again in spring and summer every year resulting from natural conditions and the species may come to extinction after a period of time. Therefore, ecologists would like to improve the total population by supporting it in each season with various options and thus choosing the optimal one. Followed by that, a mathematical model using an IDE may help to study the system and give the optimal solution in terms of the total population of the species as a result.

For simplicity, we consider the scalar, $\theta$-periodic and inhomogeneous IDE 
\begin{align} \label{BH}
u_{t+1}(x)=H_t(u_t)(x):=\int_{-\tfrac{L}{2}}^{\tfrac{L}{2}} \dfrac{a}{2}e^{-a|x-y|}\dfrac{b_t(y)u_t(y)}{1+|u_t(y)|}\d y+h_t(x)
\end{align}
for all $(x,t)\in\left[-\tfrac{L}{2},\tfrac{L}{2}\right]\times\Z$ that fits into the realistic problem and the framework of \eqref{hdef} where
\begin{itemize}
\item the domain $\left[-\tfrac{L}{2},\tfrac{L}{2}\right]$ represents the habitat,

\item the kernel $k(x,y):=\dfrac{a}{2}e^{-a|x-y|}$ with $a>0$ represents the dispersal rate of the species from point $x$ to point $y$,

\item the periodic growth function $g_t(x,z):=\dfrac{b_t(x)z}{1+|z|}$ represents the natural growth rate of the species at point $x$ and day $t$ starting from the population $z$, while the periodic parameter $b_t(x):=\alpha_t\hat{b}(x)$ represents the natural conditions at point $x$ and day $t$ with
\begin{itemize}
\item [$\cdot$] $\alpha_t:=C\left(1+\tfrac{1}{2}\sin\left(\tfrac{2\pi t}{\theta}\right)\right)>0$ for all $t\in\Z$,
\item [$\cdot$] $\hat{b}(x)\geq 0$ for all $x\in \left[-\tfrac{L}{2},\tfrac{L}{2}\right]$,
\item [$\cdot$] $\sup_{|x|\leq \tfrac{L}{2}}|b_t(x)|\leq \beta_t:=\alpha_t \sup_{|x|\leq \tfrac{L}{2}}|\hat{b}(x)|$ for all $t\in\Z$,
\end{itemize}

\item the periodic inhomogeneity $h_t(x)$ represents the supported population at point $x$ and day $t$.
\end{itemize}

With the choice of $C$ as follows
\begin{align*}
C:=\dfrac{1}{1-e^{-\tfrac{aL}{2}}}\sqrt[\theta]{\dfrac{1}{2\prod_{r=0}^{\theta-1}\beta_r\left(1+\tfrac{1}{2}\sin\left(\tfrac{2\pi r}{\theta}\right)\right)}} >0
\end{align*}
for all $t\in\Z$, the assumptions \eqref{ellass} and \eqref{LipHt} hold where $\lambda_t:=\beta_t\left(1-e^{-aL/2}\right)$ and
\begin{align*}
\|H_t(u)-H_t(\bar{u})\|\leq \beta_t\left(1-e^{-aL/2}\right) \|u-\bar{u}\| \fall u,\bar{u}\in C(\left[-\tfrac{L}{2},\tfrac{L}{2}\right]).
\end{align*}
By Theorem \ref{thmide}, for $\ell=\prod_{r=0}^{\theta-1}\lambda_r\leq\dfrac{1}{2}$, \eqref{BH} is contractive and has the $\theta$-periodic pullback and forward attractor $\A^\ast=\{(t,u^\ast_t)\in\Z\times C(\left[-\tfrac{L}{2},\tfrac{L}{2}\right]): t\in\Z\}$ due to Corollary~\ref{pattide}. Furthermore, for the initial solution $u_0\in C(\left[-\tfrac{L}{2},\tfrac{L}{2}\right])$ representing the initial population, the error estimate \eqref{errest} then becomes
\begin{align*}
\sup_{s\in\Z}\|u_s^\ast-\varphi(s,s-t\theta,u_0)\| \leq 2^{1-t} \left(\|u_0\| + (1-e^{-aL/2})\sup_{s\in\Z}\beta_{s-1} + \|h\|_\infty\right)
\end{align*}
for all $t\in\Z$.

By the Nystr\"om method with the trapezoidal quadrature rule, the $t$-fibers $\A^\ast(t)$ are illustrated in Figure \ref{figattractor} for $t\in[0,T']$, $L:=6$, $\theta:=365$ (a year), $tol:=10^{-6}$, $a:=10$, $\hat{b}(x):=2|x|+3$ and 
\begin{align*}
u_0(x):=\begin{cases}
2x^2+0.5, &x\in[-1,1],\\
2.5, &\textrm{otherwise}.
\end{cases}
\end{align*}
For $T'$, in order to observe the periodicity of $\A^\ast$, we pick $T':=366$ (one year and one day) so that the same behaviour of the system at days 0 and 365 can be displayed. For $h_t(x)$, we consider four functions $h^i_t(x), i\in\{1,2,3,4\}$, in the following table:
\begin{table}[H]
\begin{center}
\renewcommand{\arraystretch}{2.5}
\begin{tabular}{c|c|c|c|c}
& $t\in \left(\left.0,\dfrac{\theta}{4}\right.\right]$
& $t\in \left(\left.\dfrac{\theta}{4},\dfrac{\theta}{2}\right.\right]$
& $t\in \left(\left.\dfrac{\theta}{2},\dfrac{3\theta}{4}\right.\right]$
& $t\in \left(\left.\dfrac{3\theta}{4},\theta\right.\right]$
\\ 
\hline %\rule{0pt}{15pt}
$h_t^1(x)$
& $\cos\left(\dfrac{\pi x}{L}\right)$
& $\cos\left(\dfrac{\pi x}{L}\right)$
& $2\cos\left(\dfrac{\pi x}{L}\right)$
& $2\cos\left(\dfrac{\pi x}{L}\right)$
\\
\hline %\rule{0pt}{15pt}
$h_t^2(x)$
& $\cos\left(\dfrac{\pi x}{L}\right)$
& $2\cos\left(\dfrac{\pi x}{L}\right)$
& $\cos\left(\dfrac{\pi x}{L}\right)$
& $2\cos\left(\dfrac{\pi x}{L}\right)$
\\
\hline %\rule{0pt}{15pt}
$h_t^3(x)$
& $2\cos\left(\dfrac{\pi x}{L}\right)$
& $2\cos\left(\dfrac{\pi x}{L}\right)$
& $\cos\left(\dfrac{\pi x}{L}\right)$
& $\cos\left(\dfrac{\pi x}{L}\right)$
\\
\hline %\rule{0pt}{15pt}
$h_t^4(x)$
& $2\cos\left(\dfrac{\pi x}{L}\right)$
& $\cos\left(\dfrac{\pi x}{L}\right)$
& $2\cos\left(\dfrac{\pi x}{L}\right)$
& $\cos\left(\dfrac{\pi x}{L}\right)$
\end{tabular}
\caption{Inhomogeneities $h_t^i$, $i\in\{1,2,3,4\}$}
\label{tabinhom}
\vspace{-20pt}
\end{center}
\end{table}

In Table \ref{tabinhom}, all intervals $\left(\left.0,\tfrac{\theta}{4}\right.\right]$, $\left(\left.\tfrac{\theta}{4},\tfrac{\theta}{2}\right.\right]$, $\left(\left.\tfrac{\theta}{2},\tfrac{3\theta}{4}\right.\right]$ and $\left(\left.\tfrac{3\theta}{4},\theta\right.\right]$ represent four seasons spring, summer, fall and winter, respectively, and each $h_t^i$, $i\in\{1,2,3,4\}$, represents different behaviour in each season. For instance, the populations regarding $h_t^1$ in fall and winter are twice those in spring and summer. Last, by the definitions of $u_0$, $b_t$ and $h_t$, $S=8760$ due to \eqref{fewestiteration}.

Finally, in order to meet the optimal choice for the artificial support, we compute the mean total populations $\dfrac{1}{\theta}\sum_{t=0}^{\theta-1} \bar{u}_t$ over a year for inhomogeneities $h^i_t(x), i\in\{1,2,3,4\}$, from Table \ref{tabinhom} as follows.

\begin{table}[H]
\begin{center}
\renewcommand{\arraystretch}{2}
\begin{tabular}{c|c|c|c|c}
& $h^1_t(x)$ & $h^2_t(x)$ & $h^3_t(x)$ & $h^4_t(x)$ \\
\hline
$\dfrac{1}{\theta}\sum_{t=0}^{\theta-1} \bar{u}_t$ & $7.9640$ & $5.8614$ & $8.0794$ & $10.1816$
\end{tabular}
\caption{Mean total populations $\dfrac{1}{\theta}\sum_{t=0}^{\theta-1} \bar{u}_t$ over a year for inhomogeneities $h^i_t(x), i\in\{1,2,3,4\}$}
\label{tabmeantotalpop}
\vspace{-20pt}
\end{center}
\end{table}
According to Table \ref{tabmeantotalpop}, we can conclude $h^4_t(x)$ is the best option among four. To study this behaviour, one can see that if the populations supported in spring and fall are larger than those in summer and winter to encounter natural conditions, then species can develop to the highest amount after some generations. To be more precise, in winter, the total population witnesses a significant decline and hence requires a larger support in spring. Meanwhile, despite reaching a peak in summer, the total population is supported more in fall in order to avoid a dramatic decline due to natural conditions.

\begin{figure}[H]
\begin{center}
\includegraphics[trim=125 20 155 105,clip,width=.75\textwidth]{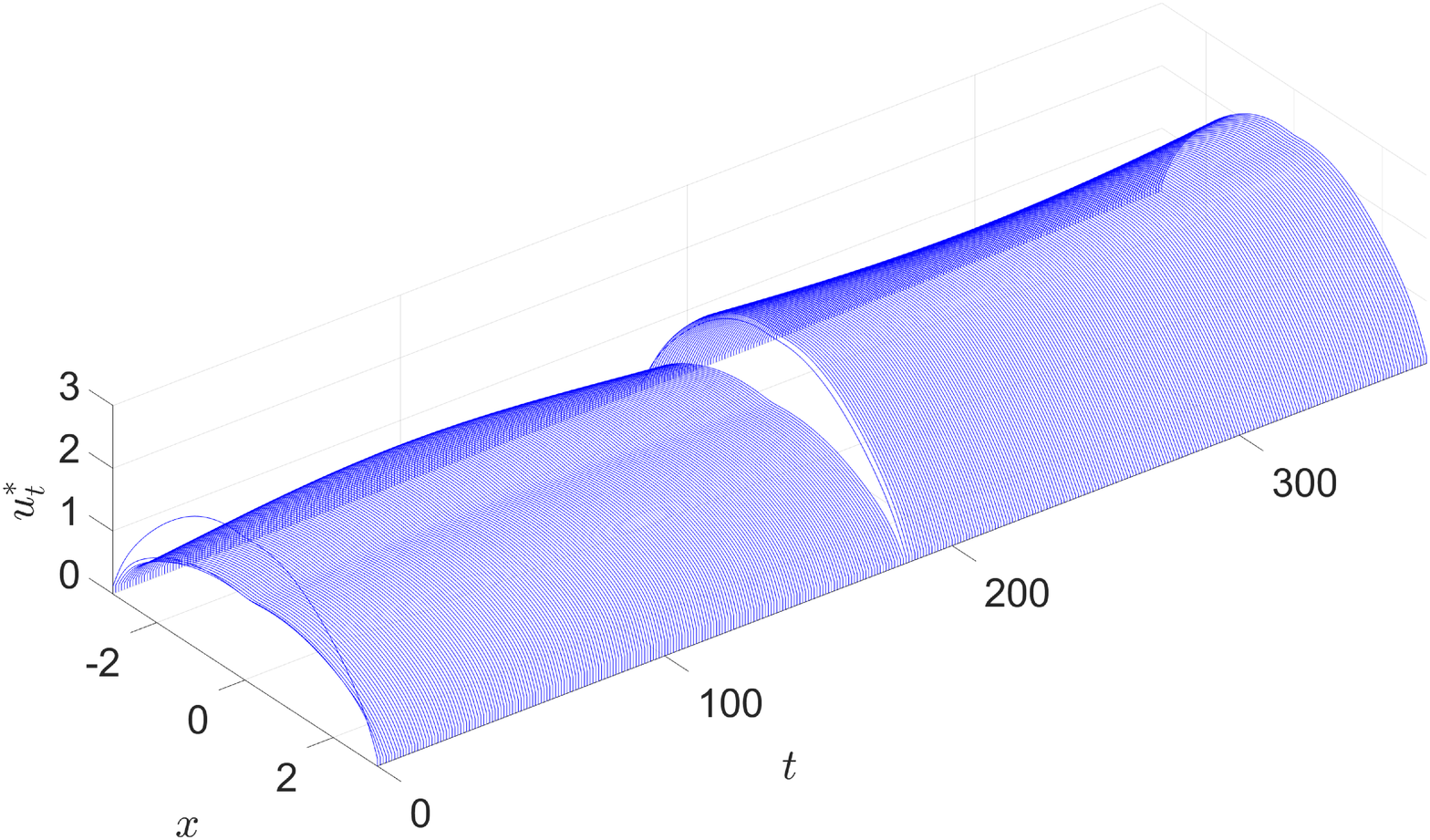}
\includegraphics[trim=125 20 155 105,clip,width=.75\textwidth]{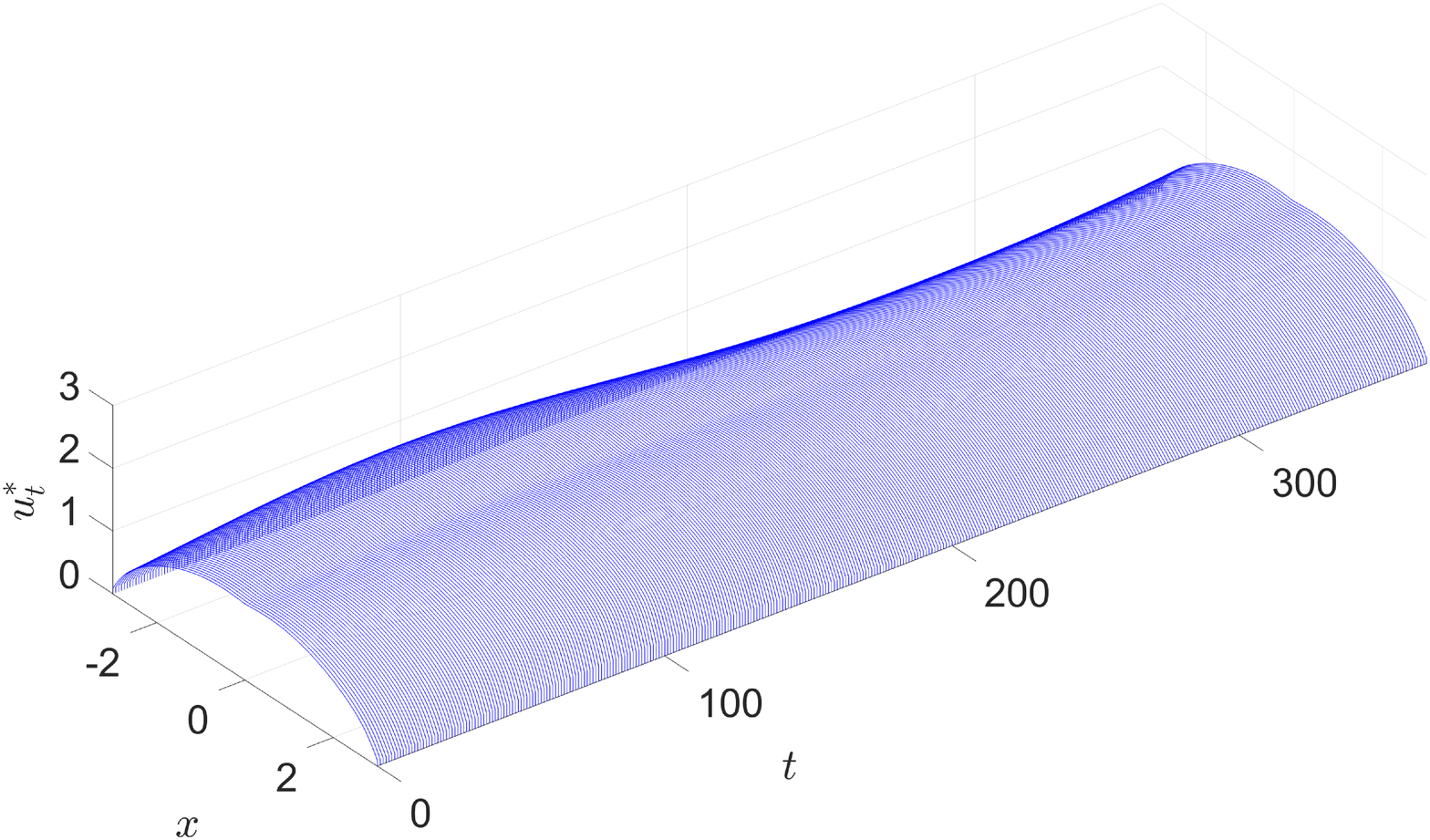}
\includegraphics[trim=125 20 155 105,clip,width=.75\textwidth]{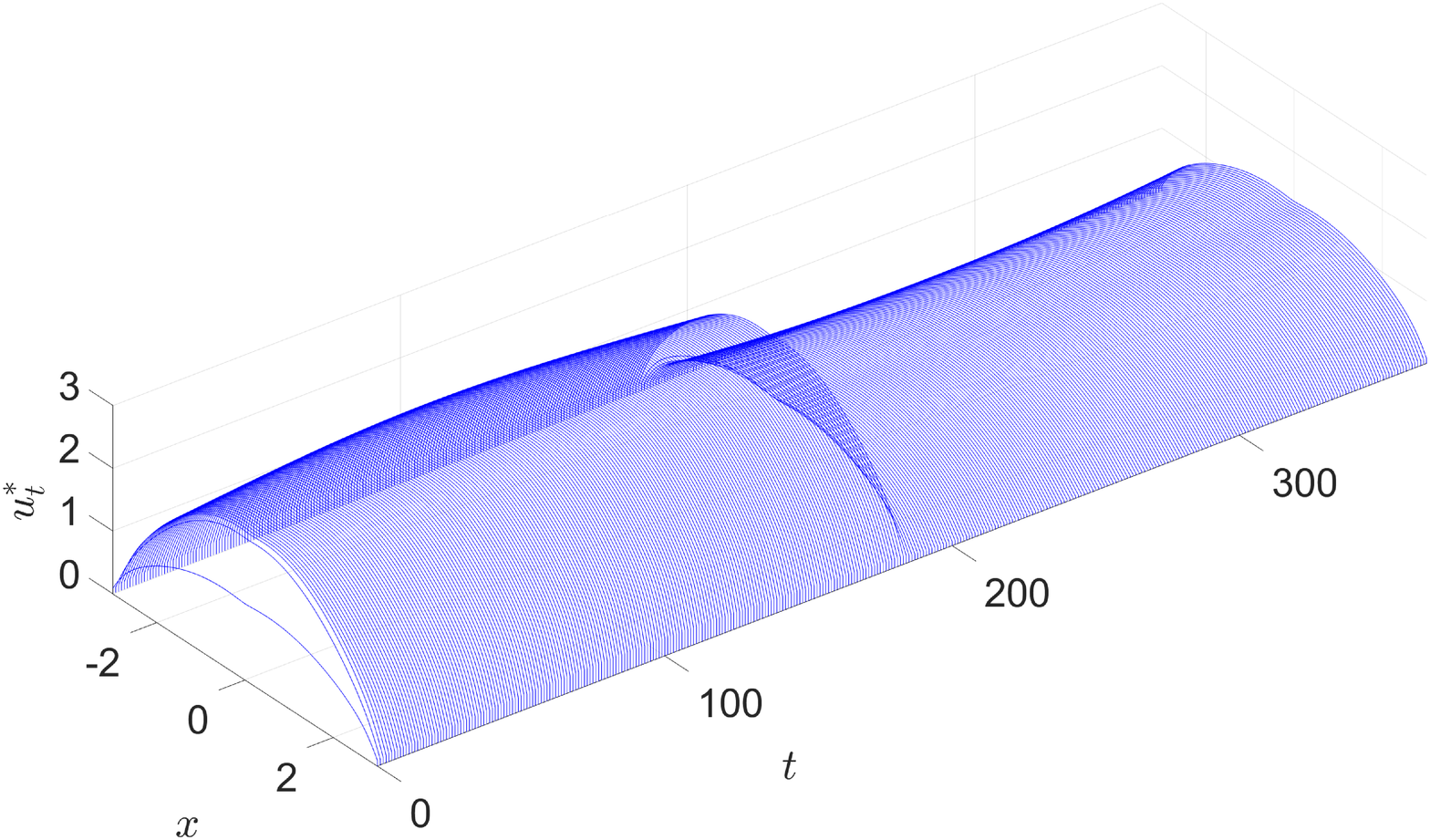}
\includegraphics[trim=125 20 155 105,clip,width=.75\textwidth]{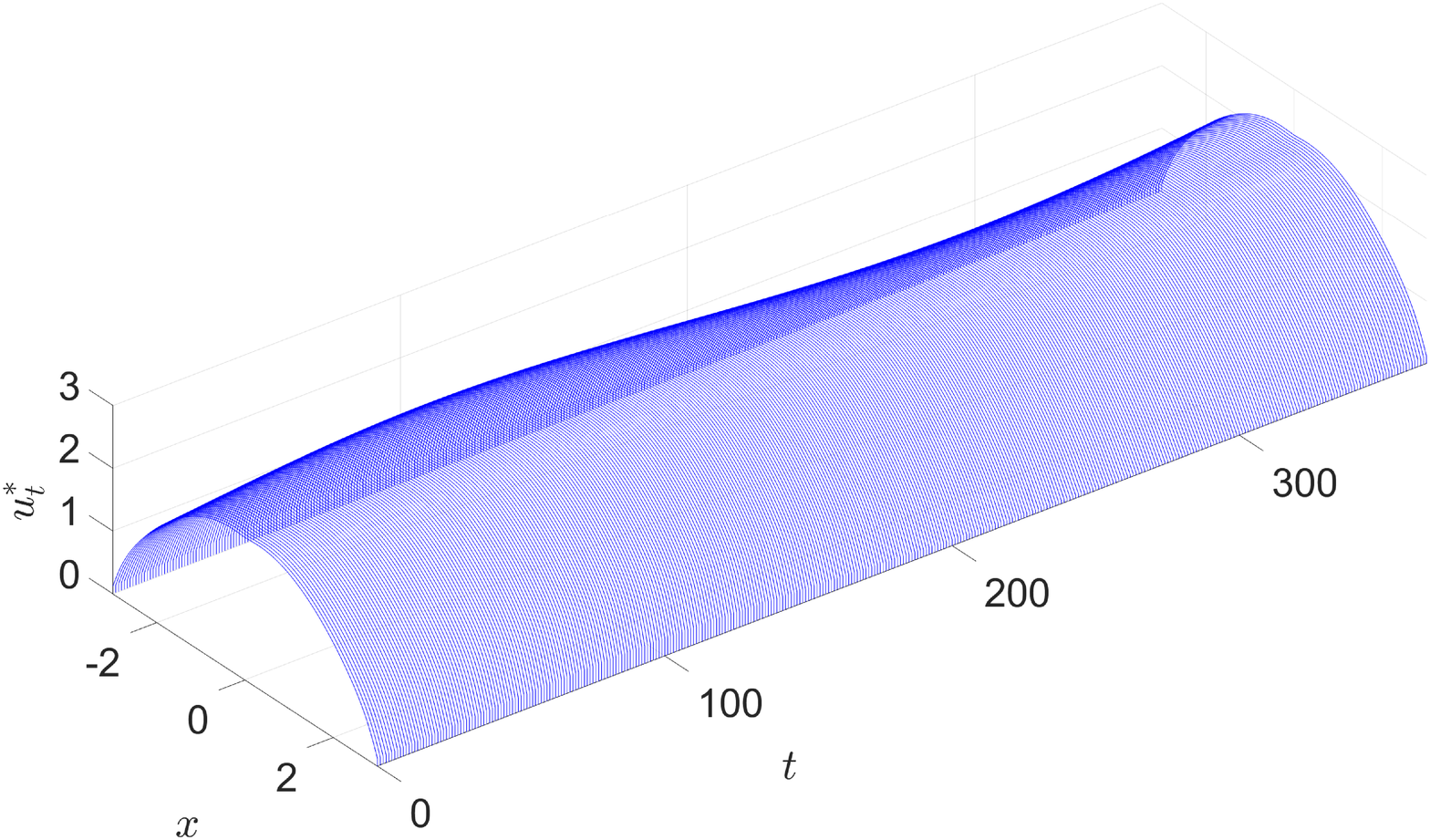}
\caption{The $t$-fibers $\A^\ast(t)$ for $t\in[0,365]$ and $h_t(x)=h^i_t(x)$, $i\in\{1,2,3,4\}$ from the top ($i=1$) to the bottom ($i=4$), respectively.}
\label{figattractor}
\end{center}
\end{figure}
\end{example}

\section*{Acknowledgments}
We are deeply grateful to our supervisor for their support as well as helpful comments and suggestions on the manuscript. We also thank a colleague of ours for polishing the language in some parts.

%%%%%%%%%%%%%%%%%%%%%%%%%%%%%%%%%%%%%%%%%%%%%%%%%%%%%%%%%%%%%%%%%%%%%%%%%%%%%%%%%%%%%

%\bibliography{ijmso}
%\bibliographystyle{unsrt}
%\bibliographystyle{alpha}

\end{document}